\theoremstyle{plain}
\newtheorem{theorem}{Theorem}[section]
\newtheorem{lemma}{Lemma}[section]
\theoremstyle{definition}
\newtheorem{definition}{Definition}[section]
\theoremstyle{remark}
\newtheorem{remark}{Remark}[section]
\newcommand{\e}{\varepsilon}
\newcommand{\he}{{\hat{\varepsilon}}}
\newcommand{\s}{\sigma}
\renewcommand{\d}{\partial}
\renewcommand{\u}{{\bf u}}
\newcommand{\ueps}{u^{\e}}
\newcommand{\uheps}{u^{\he}}
\newcommand{\x}{{\bf x}}
\newcommand{\y}{{\bf y}}
\newcommand{\z}{{\bf z}}
\newcommand{\f}{{\bf f}}
\newcommand{\q}{{\bf q}}
\newcommand{\R}{{\mathbb R}}
\newcommand{\norm}[1]{\left\Vert#1\right\Vert}
\newcommand{\abs}[1]{\left|#1\right|}
\newcommand{\Set}[1]{\left\{#1\right\}}
\newcommand{\eps}{\varepsilon}
\newcommand{\px}{\partial_x} 
\newcommand{\py}{\partial_y} 
\newcommand{\sgn}{\operatorname{sgn}}
\newcommand{\hf}{\hat{\f}}
\newcommand{\sh}{\hat{\sigma}}
\newcommand{\hs}{\hat{\sigma}}
\def\be{\begin{equation}}
\def\ee{\end{equation}}
\def\ba{\begin{aligned}}
\def\ea{\end{aligned}}
\def\bes{\begin{equation*}}
\def\ees{\end{equation*}}
\def\bc{\begin{cases}}
\def\ec{\end{cases}}
\numberwithin{equation}{section}
\begin{document}
\title[Stochastic balance laws]{On nonlinear stochastic balance laws}

\author{Gui-Qiang Chen}
\address[Gui-Qiang Chen]{\newline
	Mathematical Institute,
         University of Oxford, 24--29 St Giles',
         Oxford, OX1, 3LB, UK;  Department of Mathematics, Northwestern University,
         Evanston, IL 60208-2730, USA}
\email[]{\href{mailto:chengq@maths.ox.ac.uk}{chengq@maths.ox.ac.uk}}
\urladdr{\href{http://people.maths.ox.ac.uk/chengq/}{http://people.maths.ox.ac.uk/chengq/}}

\author{Qian Ding}
\address[Qian Ding]{\newline
	Department of Mathematics,
         Northwestern University,
         Evanston, IL 60208-2730, USA}
\email[]{\href{mailto:dingqian@math.northwestern.edu}{dingqian@math.northwestern.edu}}

\author{Kenneth H. Karlsen}
\address[Kenneth H. Karlsen]{\newline
   Centre of Mathematics for Applications,
 University of Oslo,
  P.O.\ Box 1053, Blindern,
  NO--0316 Oslo, Norway}
\email[]{\href{mailto:kennethk@math.uio.no}{kennethk@math.uio.no}}
\urladdr{\href{http://folk.uio.no/kennethk}{http://folk.uio.no/kennethk}}

\keywords{Stochastic balance law, vanishing viscosity method, entropy solution,
existence, uniqueness, stability, $BV$ estimates, error estimates, continuous dependence}

\subjclass[2000]{Primary: 35R60, 60H15, 35F25, 35L60; Secondary: 35L03, 60G15}

\date{October 8, 2011}

\begin{abstract}
We are concerned with multidimensional stochastic balance laws.
We identify a class of nonlinear balance laws for which
uniform spatial $BV$ bounds for vanishing viscosity approximations
can be achieved. Moreover, we establish temporal
equicontinuity in $L^1$ of the approximations, uniformly
in the viscosity coefficient. Using these estimates, we
supply a multidimensional existence theory
of stochastic entropy solutions. In addition, we
establish an error estimate for the stochastic viscosity method,
as well as an explicit estimate for the continuous
dependence of stochastic entropy solutions on the flux
and random source functions. Various further generalizations of the results
are discussed.
\end{abstract}
\maketitle

\section{Introduction}\label{sec:intro}

We are concerned with the well-posedness
and continuous dependence estimates for the
stochastic balance laws
\begin{equation}\label{1.1}
	\partial_t u(t,\x)+ \nabla\cdot \f(u(t,\x))
	=\s(u(t,\x))\, \partial_t W(t), \qquad \x\in \R^d,\, t>0,
\end{equation}
with initial data:
\begin{equation}\label{1.2}
	u(0,\x)=u_0(\x),\qquad \x\in \R^d.
\end{equation}
We denote by $\nabla$ and $\Delta$ the spatial gradient
and Laplacian, respectively.

Equation \eqref{1.1} is a conservation law perturbed by a random force driven
by a Brownian motion $W(t)=W(t,\omega)$, $\omega\in \Omega$,  over a stochastic
basis $(\Omega,\mathcal{F},\Set{\mathcal{F}_t}_{t\ge 0},P)$, where $P$ is a
probability measure, $\mathcal{F}$ is a $\sigma$-algebra,
and $\Set{\mathcal{F}_t}_{t\ge 0}$ is a right-continuous filtration
on $(\Omega,\mathcal{F})$ such that $\mathcal{F}_0$ contains all
the $P$--negligible subsets.

The initial function $u_0(\x)$ is assumed to be
a random variable satisfying
\begin{equation}\label{1.3}
	E\left[\|u_0\|_{L^p(\R^d)}^p
	+\abs{u_0}_{BV(\R^d)}\right]<\infty, \qquad p=1,2,\cdots.
\end{equation}

Regarding the flux $\f=(f_1, \cdots, f_d): \R\to \R^d$,
we assume that $f_i\in C^2(\R)$, $i=1,\ldots,d$, and that each $f_i$
has at most polynomial growth in $u$, i.e.,
\begin{equation}\label{eq:f-ass}
	|f_i(u)|\le C \left(1+|u|^r\right)
	\qquad \text{for some finite integer $r\ge 0$.}
\end{equation}

In this paper we focus mainly on the class of noise functions
$\sigma$ for which there exists a constant $C>0$ such that
\begin{equation}\label{eq:sigma-ass}
\sigma(0)=0, \,\,\,\qquad	|\s(u)-\s(v)|\leq C |u-v| \quad \forall u,v\in\R.
\end{equation}
This can be generalized to wider classes for different results in
terms of existence, stability, and continuous dependence, respectively;
see Section \ref{sec:general} for more details. One reason
for requiring $\s(0)=0$ is that it follows from the $L^1$--contraction
principle that  $E[\|u(t,\cdot)\|_{L^1(\R^d)}]$ is finite.
Similarly, the Lipschitz continuity of $\sigma(u)$ is required
for the existence and uniform $L^p$ estimates of solutions.

Stochastic partial differential equations arise in a number
of problems concerning random-phenomena
occurring in biology, physics, engineering, and economics.
In recent years, there has been an increased interest in studying
the effect of stochastic forcing on solutions of nonlinear
stochastic partial differential equations. Of specific interest is
the effect of noise on discontinuous waves, since these are often
the relevant solutions; an issue of particular importance concerns the
well-posedness (existence, uniqueness, and stability) of discontinuous solutions.

The fundamental fluid dynamics models are based on
the compressible Navier-Stokes equations and  Euler equations.
However, abundant experimental observations suggest that the chaotic nature of many
high-velocity fluid dynamics phenomena calls for their stochastic formulation.
Indeed, in these flows with large Reynolds numbers, microscopic
perturbations get amplified to macroscopic scales giving rise to
unsteady flow patterns that deviate significantly from those predicted
by the classical Navier-Stokes/Euler models, and more viable models seem to be
the stochastic Euler or Navier-Stokes equations.
In the present paper we are interested in nonlinear hyperbolic equations
with stochastic forcing, so-called stochastic balance laws.
These balance laws can be viewed as a simple caricature
of the stochastic Euler equations.

Some efforts have been made in the analysis of nonlinear stochastic balance laws.
When $\sigma\equiv 0$, \eqref{1.1} becomes a nonlinear conservation law
for which the maximum principle holds. A satisfactory
well-posedness theory is now available (cf. \cite{Dafermos}).
In \cite{HR}, a one-dimensional stochastic balance law
was analyzed for $u_0$ in $L^\infty$ and compactly
supported $\sigma=\sigma(u)$, which ensures an $L^\infty$
bound. A splitting method was used to construct approximate solutions, and it was
shown that a subsequence of these approximations converges to a
(possible non-unique) weak solution.

For general $\sigma$, the maximum principle is no longer valid.
Indeed, even for  $L^\infty$ initial data $u_0$, the solution is no
longer in $L^\infty$ generically.  For $\sigma=\sigma(t,x)$ in
$C_t(W^{1,\infty}_x)$ and with compact
support in $x$, Kim \cite{Kim} established
the existence and uniqueness of entropy solutions
in the one-dimensional case; see also \cite{Vallet:2009fk}.
For more general $\sigma=\sigma(x,u)$ depending on $u$
and for multidimensional equations in the $L^p$ framework, the uniqueness of
strong stochastic entropy solutions was first established
in Feng-Nualart \cite{FN}, but the existence
result was restricted to one dimension; see the recent
paper Debussche-Vovelle \cite{Debussche:2010fk}
for multidimensional results via a kinetic
formulation\footnote[1]{We became aware of this paper after
our main results were obtained.}. For the $L^p$ theory of
deterministic conservations laws, see \cite{Szepessy:1989vn}.

One of our main observations is that uniform spatial $BV$ bounds are
preserved for stochastic balance laws with noise functions $\sigma(u)$
satisfying \eqref{eq:sigma-ass}. This yields the existence of strong
stochastic entropy solutions in $L^p\cap BV$, as well as in $L^p$,
for multidimensional balance laws \eqref{1.1}. Furthermore, we develop
a ``continuous dependence" theory for stochastic entropy solutions in $BV$, which
can be used, for example, to derive an error estimate for the vanishing viscosity method.
Whenever $\sigma=\sigma(\x,u)$ has a dependency on the spatial position $\x$,
$BV$ estimates are no longer available, but we show that
the continuous dependence framework can be used to derive local
fractional $BV$ estimates, which in turn can be
used, as before via a temporal equicontinuity estimate, to establish
a multidimensional existence result.

Besides providing an existence result in a
multidimensional context by standard methods, one reason for
singling out the class of nonlinear balance laws defined by \eqref{eq:sigma-ass}
is that it makes a natural test bed for numerical analysis, without
having to account for all the added technical complications in a
pure $L^p$ framework. Moreover, by assuming $\s(a)=\s(b)=0$ for
some constants $a<b$, one ensures that the solution remains bounded
between $a$ and $b$ if the initial function $u_0$ does so. Consequently,
it is possible to identify a class of stochastic balance laws for which $L^p\cap BV$,
or even $L^\infty\cap BV$, supplies a relevant and
technically simple functional setting, tailored for the
construction and analysis of numerical methods.

For other related results, we refer to Sinai \cite{Sinai} and
E-Khanin-Mazel-Sinai \cite{EKMS}
for the existence, uniqueness, and
weak convergence of invariant measures for the one-dimensional
Burgers equation with stochastic forcing which is periodic
in $x$, as well as the structure and regularity properties of the
solutions that live on the support of this measure.
We also refer to Lions-Souganidis \cite{LS}
for Hamilton-Jacobi equations with stochastic forcing and the
so-called ``stochastic" viscosity solutions.

We employ the vanishing viscosity method to establish the existence of
stochastic entropy solutions. To this end, consider the
stochastic viscous conservation law
\begin{equation}\label{2.1}
	\partial_t u^\e(t,\x)+ \nabla\cdot \f(u^\e(t,\x))
	=\s(u^\e(t,\x))\partial_t W(t)+\e\Delta u^\e(t,\x)
\end{equation}
for  any fixed $\e>0$, with initial data
\begin{equation}\label{2.2}
	u^\e(0,\x)=u^\e_{0}(\x),\qquad \x\in\R^d,
\end{equation}
where
$u_0^\e(\x)$ is a standard mollifying smooth approximation to $u_0(\x)$
with
$$
E\left[\int_{\R^d} \abs{u_0^\e(\x)}^p\, d\x\right]
\le E\left[\int_{\R^d} |u_0(\x)|^p\, d\x\right]
$$
and, if $u_0\in BV(\R^d)$,
$$
E\left[ \int_{\R^d} \abs{\nabla u_0^\e(\x)} \, d\x\right]
\le E\left[\int_{\R^d} \abs{\nabla u_0(\x)}\, d\x\right].
$$
In addition, $E\left[\int_{\R^d} |\nabla^2 u_0^\e(\x)| \, d\x\right]<\infty$,
i.e., $|\nabla^2 u_0^\e|$ is integrable for each fixed $\e$.

With regard to the viscous equation \eqref{2.1}, we
should replace $(\f,\s)$ by appropriate smooth
approximations $(\f^\eps,\s^\eps)$. However, mainly to ease
the presentation throughout this paper, we will not do that
but instead simply assume that $(\f,\s)$ are sufficiently
smooth (cf. \cite{FN}) in order to ensure the
validity of our calculations.
At times, we will do the same with the initial data.

The existence of global smooth solutions
to \eqref{2.1}--\eqref{2.2} is
established in \cite{FN}, along with the
following uniform estimates for $p\ge 1$ and $T>0$:
\begin{equation}\label{2.3}
	\sup_{\e>0}\sup_{0\leq t\leq T}
	E\left[\|u^\e(t,\cdot)\|_{L^p(\R^d)}^p\right]
	+\sup_{\e>0}E\left[\e\int_0^T
	\|\nabla u^\e(t,\cdot)\|_{L^2(\R^d)}^2dt \right]<\infty.
\end{equation}
The solution satisfies
\begin{equation}\label{2.4}
	\begin{split}
		u^\e(t,\x) & =\int_{\R^d} G_\eps(t, \x-\y)u_0(\y)\, d\y
		\\ & \quad -\int_0^t\int_{\R^d}
		G_\eps(t-s,\x-\y)\nabla\cdot\f(u^\e(t,\y))\, d\y \, ds \\
		&\quad \quad
		+\int_0^t\int_{\R^d} G_\eps(t-s, \x-\y)\sigma(u^\e(s,\y))\, d\y\, dW(s),
	\end{split}
\end{equation}
where $G_\eps(t,\x)$ is the heat kernel:
$$
G_\eps(t,\x)=\frac{1}{(4\pi \e t)^{d/2}}
e^{-\frac{|\x|^2}{4\e t}}, \qquad t>0.
$$
Using \eqref{1.3} and \eqref{2.3}--\eqref{2.4}, it follows that, for each fixed $\e>0$,
\begin{equation}\label{2.5}
	E\left[\|(\nabla, \Delta) u^\e\|_{L^1((0,T)\times \R^d)}\right]
<\infty \qquad \text{for any finite $T>0$,}
\end{equation}
that is, $\nabla u^\e$ and $\nabla^2 u^\e$ are integrable for each fixed $\e>0$.

With different methods, we will later
prove an $\eps$-uniform spatial $BV$ estimate.

The remaining part of this paper is organized as follows:
In Section \ref{sec:BV}, we prove the uniform spatial $BV$
bound for stochastic viscous solutions $\ueps(t,\x)$.
Based on the $BV$ bound, we establish
the equicontinuity of $u^\e(t,\x)$ in $t>0$,
uniformly in the viscosity coefficient $\e>0$, in Section \ref{sec:timecont}.
With these uniform estimates, we establish the existence of stochastic
entropy solutions in $L^p\cap BV$, as the vanishing viscosity limits
for problem \eqref{2.1}--\eqref{2.2} with initial
data in $L^p\cap BV$, in Section \ref{sec:exist-LpBV}.
Combining this existence result with the $L^1$-stability theory in
Feng-Nualart \cite{FN} leads to the well-posedness in $L^p$
for problem  \eqref{1.1}--\eqref{1.2}. We further
establish estimates for the ``continuous dependence on the
nonlinearities" for $BV$ stochastic entropy solutions
in Section \ref{sec:contdep}, which also leads to
an error estimate for  \eqref{2.1}--\eqref{2.2}.
Various further generalizations of the results
are discussed in Section \ref{sec:general}.

\section{Uniform spatial $BV$--estimates}\label{sec:BV}

As indicated in Section \ref{sec:intro}, we have known the regularity
and the uniform $L^p$--estimate \eqref{2.3} ($p\ge 1$)
for the viscous solutions $u^\e(t,\x)$ of \eqref{2.1}--\eqref{2.2}.
In this section, we establish the uniform $L^1$-estimate for $\nabla u^\e$,
that is, the uniform $BV$-estimate of $u^\e(t,\x)$ in the spatial variables $\x$.

Before we do that, let us indicate why $BV$ estimates
do not seem to be available when the noise
coefficient function $\sigma=\sigma(x,u)$ depends on the spatial position $x$,
even if that dependence is $C^\infty$ (see Section \ref{sec:general}
for fractional $BV$ estimates).
To this end, it suffices to consider the simple stochastic differential equation:
$$
du=\sigma(x,u)\, dW(t),
\qquad u(0)=u_0(x), \qquad x\in \R,
$$
where we have dropped nonlinear transport effects
and restricted to one spatial dimension.
The spatial derivative $v=\partial_x u$ satisfies
$$
dv =\left(\sigma_u(x,u)v + \sigma_x(x,u)\right)\, dW(t).
$$
Let $\eta$ be a $C^2$--function. By Ito's formula,
\begin{align*}
	d\eta(v) & =\eta'(v)\big(\sigma_u(x,u)v + \sigma_x(x,u)\big)\, dW(t)
	+\frac12 \eta''(v)\big(\sigma_u(x,u)v
	+ \sigma_x(x,u)\big)^2\, dt.
\end{align*}
Integrating in $x$ and taking expectations, it follows that
\begin{align*}
	E\left[\int \eta(v(t))\, dx\right] & =E\left[\int \eta(v(0))\, dx\right]
	\\ &  \qquad +
	E\left[\int_0^t\int \frac12 \eta''(v)\big(\sigma_u(x,u)v
	+ \sigma_x(x,u)\big)^2\, dx\, ds\right].
\end{align*}
Modulo an approximation argument, we can take $\eta(\cdot)$ as
$\abs{\cdot}$. Unless $\sigma_x\equiv 0$, the second term on the right-hand side
does not seem to be controllable  (this term
vanishes when $\sigma_x\equiv 0$).

\medskip
Let us now continue with the derivation of the $BV$ estimate for \eqref{2.1}.
We will need a $C^2$--approximation of the Kruzkov entropy.
Let $\bar{\eta}:\R\to \R$ be a $C^2$--function satisfying
\begin{equation}\label{2.1b}
	\bar{\eta}(0)=0,\quad \bar{\eta}(-r)=\bar{\eta}(r), \quad
	\bar{\eta}'(-r)=-\bar{\eta}'(r), \quad \bar{\eta}''\ge 0,
\end{equation}
and
\begin{equation}\label{2.2b}
	\bar{\eta}'(r)
	=\begin{cases}
   		-1, & \text{when $r<-1$}, \\
		\in [-1,1], & \text{when $\abs{r}\le 1$}, \\
		+1, & \text{when $r> 1$}.
	\end{cases}
\end{equation}
For any $\rho>0$, define the function $\eta_\delta:\R\to \R$
by
\begin{equation}\label{2.3b}
	\eta_\rho(r)=\rho \bar{\eta}(\frac{r}{\rho}).
\end{equation}
Then
\begin{equation}\label{2.4b}
	\abs{r}-M_1\rho
	\le \eta_\rho(r)\le \abs{r},
	\qquad
	\abs{\eta_\rho''(r)}\le  \frac{M_2}{\rho}
	\mathbf{1}_{\abs{r}<\rho},
\end{equation}
where
\begin{equation}\label{2.5b}
	M_1=\sup_{\abs{r}\le 1}\big|\abs{r}-\bar{\eta}(r)\big|, \qquad
	M_2=\sup_{\abs{r}\le 1}\abs{\bar{\eta}''(r)}.
\end{equation}

We will frequently utilize the
Burkholder-Davis-Gundy inequality, which we now recall.
For $p > 0$, there exists a constant $C=C_p$
such that, if $M_t$ is a continuous
martingale and $t$ a stopping time, then
$$
E\left[ \sup_{s\le t} |M_s|^p \right]
\le C_p E\left[\langle M \rangle_t^{p/2}\right],
$$
where $\langle M \rangle_t$ is the quadratic variation of $M_t$.

\begin{theorem}[Spatial $BV$ estimate]\label{thm:2.1}
Suppose that \eqref{1.3}--\eqref{eq:sigma-ass} hold.
Let $u^\e(t,\x)$ be the solution of \eqref{2.1}--\eqref{2.2}.
Then, for $t>0$,
\begin{equation*}
	E\left[\int_{\R^d} |\nabla u^\e(t,\x)|\,d\x\right]
	\leq E\left[\int_{\R^d}  |\nabla u^\e_0(\x)|\,d\x\right]
	\leq  E\left[\int_{\R^d}  |\nabla u_0(\x)|\,d\x\right].
\end{equation*}
\end{theorem}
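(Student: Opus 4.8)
The second inequality is immediate from the construction of the mollified datum $u_0^\e$, so the content is the first one. The plan is to bound, for each coordinate direction $k$, the quantity $E[\int_{\R^d}|\d_{x_k}\ueps(t,\x)|\,d\x]$ and then sum over $k$. Writing $v=\d_{x_k}\ueps$ and differentiating \eqref{2.1} in $x_k$, one finds that $v$ solves the linear stochastic transport--diffusion equation
\be
\d_t v + \nabla\cdot(\f'(\ueps)\,v) = \s'(\ueps)\,v\,\d_t W + \e\Delta v,
\ee
with $v(0,\cdot)=\d_{x_k}u_0^\e$. Since $\ueps$ is smooth and, by \eqref{2.5}, its first and second spatial derivatives are integrable for each fixed $\e$, this equation is amenable to an It\^o/entropy argument with the smoothed Kruzkov entropy $\eta_\rho$ of \eqref{2.3b}.

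Next I would apply It\^o's formula to $\eta_\rho(v)$, integrate over $\R^d$, and take expectations. Using \eqref{2.3} one checks the requisite square-integrability of $\eta_\rho'(v)\s'(\ueps)v$, so the stochastic integral is a genuine martingale and drops out in expectation, leaving
\be
\ba
E\Big[\int_{\R^d}\eta_\rho(v(t))\,d\x\Big]
&= E\Big[\int_{\R^d}\eta_\rho(v(0))\,d\x\Big]
- E\Big[\int_0^t\!\!\int_{\R^d}\eta_\rho'(v)\,\nabla\cdot(\f'(\ueps)v)\,d\x\,ds\Big]\\
&\quad - \e\,E\Big[\int_0^t\!\!\int_{\R^d}\eta_\rho''(v)\,|\nabla v|^2\,d\x\,ds\Big]
+ \tfrac12 E\Big[\int_0^t\!\!\int_{\R^d}\eta_\rho''(v)\,\big(\s'(\ueps)v\big)^2\,d\x\,ds\Big].
\ea
\ee
The viscous term is nonpositive since $\eta_\rho''\ge 0$, so it may be discarded.

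Two terms remain to be controlled as $\rho\to 0$. For the transport term, integrating by parts twice and using $\eta_\rho'(v)v-\eta_\rho(v)=\int_0^v\eta_\rho''(\tau)\tau\,d\tau=:\Phi_\rho(v)=\rho\,\Phi_1(v/\rho)$ with $\Phi_1$ bounded (hence $|\Phi_\rho|\le C\rho$), one obtains
\be
-\int_{\R^d}\eta_\rho'(v)\,\nabla\cdot(\f'(\ueps)v)\,d\x
= -\int_{\R^d}\sum_i f_i''(\ueps)\,\d_{x_i}\ueps\,\Phi_\rho(v)\,d\x,
\ee
which is $O(\rho)$ and vanishes in the limit, given the integrability of $\sum_i|f_i''(\ueps)|\,|\d_{x_i}\ueps|$ afforded by the smoothness of $\f$ and \eqref{2.5}. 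The crux is the It\^o correction term. Here I would invoke $|\eta_\rho''(v)|\le \frac{M_2}{\rho}\mathbf{1}_{|v|<\rho}$ from \eqref{2.4b} together with the structural assumption \eqref{eq:sigma-ass}: since $\s(0)=0$ and $\s$ is Lipschitz, $|\s'(\ueps)v|\le C|v|$, so on $\{|v|<\rho\}$ we have $(\s'(\ueps)v)^2\le C^2v^2\le C^2\rho|v|$ and therefore
\be
\tfrac12\,\eta_\rho''(v)\,\big(\s'(\ueps)v\big)^2 \le \tfrac12 C^2 M_2\,|v|\,\mathbf{1}_{0<|v|<\rho}.
\ee
By \eqref{2.5} the right-hand side is dominated by an integrable function, so the It\^o correction tends to $0$ as $\rho\to 0$ by dominated convergence.

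Finally, passing to the limit $\rho\to 0$ and using $|r|-M_1\rho\le\eta_\rho(r)\le|r|$ (so that $E[\int\eta_\rho(v(t))]\to E[\int|v(t)|]$ and $E[\int\eta_\rho(v(0))]\le E[\int|v(0)|]$, again by dominated convergence), I get $E[\int_{\R^d}|\d_{x_k}\ueps(t)|\,d\x]\le E[\int_{\R^d}|\d_{x_k}u_0^\e|\,d\x]$; summing over $k=1,\dots,d$ yields the asserted spatial $BV$ bound. I expect the main difficulty to be precisely the It\^o correction term: without the sublinear scaling $(\s'(\ueps)v)^2\lesssim v^2$ forced by \eqref{eq:sigma-ass}, this term would fail to be absorbed, which is exactly the mechanism that breaks down for $\s=\s(\x,u)$ as explained in the computation at the beginning of this section. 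Secondary technical points are the rigorous justification of It\^o's formula for the spatially integrated functional (via a stochastic Fubini argument) and the vanishing in expectation of the martingale term.
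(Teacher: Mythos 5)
Your proposal is correct and follows essentially the same route as the paper: differentiate the equation in $x_k$, apply It\^o's formula to the smoothed Kruzkov entropy $\eta_\rho$, discard the viscous term by convexity, kill the stochastic integral in expectation, and let $\rho\to 0$ using $\abs{\eta_\rho''}\le \frac{M_2}{\rho}\mathbf{1}_{\abs{\cdot}<\rho}$ together with $\s(0)=0$ and the Lipschitz bound to absorb the It\^o correction. The only (cosmetic) difference is in the transport term, where you integrate by parts once more to exhibit the explicit $O(\rho)$ factor $\Phi_\rho(v)=v\eta_\rho'(v)-\eta_\rho(v)$, whereas the paper keeps the term $\eta_\rho''(v)\,v\,\nabla v\cdot\f'(u^\e)$ and disposes of it by dominated convergence; both are valid.
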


\begin{proof}
Taking the derivative of \eqref{2.1} with
respect to $x_i$, $1\leq i\leq d$, we obtain
\begin{equation*}
	\partial_t (u^\e_{x_i})+ \nabla\cdot
	\big(\f'(u^\e(t,\x))u^\e_{x_i}\big)=\s'(u^\e(t,\x))u^\e_{x_i}
	\partial_tW(t)+\e\Delta(u^\e_{x_i}).
\end{equation*}

Applying Ito's formula to $\eta_\rho(u^\e_{x_i})$ yields
\begin{equation}\label{6}
	\begin{split}
		\d_t\eta_\rho(u^\e_{x_i})
		& =\eta_\rho'(u^\e_{x_i})\s'(u^\e)u^\e_{x_i}\d_tW(t)
		\\ & \qquad
		+\eta_\rho'(u^\e_{x_i})\left(\e\Delta u^\e_{x_i}
		-\nabla\cdot(\f'(u^\e)u^\e_{x_i})\right)
		\\ & \qquad\quad
		+\frac{1}{2}\eta_\rho''(u^\e_{x_i})
		\big(\s'(u^\e)u^\e_{x_i}\big)^2.
	\end{split}
\end{equation}

We observe that
\begin{equation}\label{7}
	\begin{split}
		\e \eta_\rho'(u^\e_{x_i})\Delta(u^\e_{x_i})
		& = \e\left(\nabla\cdot (\eta_\rho'(u^\e_{x_i})
		\nabla u^\e_{x_i})-\eta_\rho''(u^\e_{x_i})
		\abs{\nabla u^\e_{x_i}}^2\right)
		\\ & = \e \left(\Delta\eta_\rho(u^\e_{x_i})
		-\eta_\rho''(u^\e_{x_i})|\nabla u^\e_{x_i}|^2\right)\\
		&\le \e \Delta\eta_\rho(u^\e_{x_i}),
	\end{split}
\end{equation}
by using the convexity of $\eta_\rho$, and
interpreting $\Delta\eta_\rho(u^\e_{x_i})$ in the
distributional sense.
Here we have used that $\nabla u_{x_i}^\e, 1\le i\le d$, are integrable (cf.~\eqref{2.5})
so that they vanish at infinity, which leads to the vanishing boundary terms in \eqref{7}.

Integrating \eqref{6} with respect to $\x$, using \eqref{2.5} and \eqref{7}, and noting that
$$
\int_{\R^d}\int_0^t
\eta'(u^\e_{x_i})\s'(u^\e)u^\e_{x_i}\, dW(s)d\x
$$
is a martingale, we arrive at
\begin{equation}\label{8}
	\begin{split}
		&E\left[\int_{\R^d} \eta_\rho(u^\e_{x_i}(t,\x))\, d\x\right]
		-E\left[\int_{\R^d} \eta_\rho(u^\e_{x_i}(0,\x))\,d\x\right]
		\\ & \quad
		\leq E\Biggl[-\int_0^t\int_{\R^d} \eta_\rho'(u^\e_{x_i})
		\nabla\cdot (\f'(u^\e)u^\e_{x_i})\,d\x \, ds
		\\ & \quad \qquad \qquad
		+\frac{1}{2}\int_0^t\int\eta_\rho''(u^\e_{x_i})
		\big(\s'(u^\e)u^\e_{x_i}\big)^2 \,d\x \, ds \Biggr].
	\end{split}
\end{equation}
Now we send $\rho\to 0$ in \eqref{8}.
By the dominated convergence theorem,
\begin{align*}
	& E\left[\int_{\R^d}  \abs{u^\e_{x_i}(t,\x)}\,d\x\right]
	\\ & \leq
	E\left[\int_{\R^d}  \abs{u^\e_{x_i}(0,\x)}\,d\x\right]
	-\lim_{\rho\rightarrow 0}
	E\left[\int_0^t\int_{\R^d}  \eta_\rho'(u^\e_{x_i})\nabla
	\cdot(\f'(u^\e)u^\e_{x_i})\,d\x\, ds\right]
	\\ & \qquad \quad
	+\lim_{\rho\rightarrow 0}\frac{1}{2}
	E\left[\int_0^t\int_{\R^d} \eta_\rho''(u^\e_{x_i})
	\big(\s'(u^\e)u^\e_{x_i}\big)^2\,d\x \, ds\right]
	\\ & := E\left[\int_{\R^d}  \abs{u^\e_{x_i}(0,\x)}\,d\x\right]
	+I_1+I_2.
\end{align*}

For the $I_1$ term,
\begin{align*}
	|I_1| &=\lim_{\rho\to 0}
	\abs{E\left[\int_0^t\int_{\R^d}  \nabla\cdot\big(\f'(u^\e)
	\eta_\rho'(u^\e_{x_i})u^\e_{x_i}\big)\,d\x\, ds\right]}
	\\ &\quad \quad +\lim_{\rho\to 0}
	\abs{E\left [\int_0^t\int_{\R^d}  \eta_\rho''(u^\e_{x_i})u_{x_i}^\e
	\nabla u_{x_i}^\e\cdot\f'(u^\e)\, d\x\, ds\right]}\\
	& \le C \lim_{\rho\to 0}
	E\left[\int_0^t\int_{\R^d}  \abs{u_{x_i}^\e}
	\, \frac{1}{\rho}\chi_{[-\rho, \rho]}(u_{x_i}^\e)
	\abs{\nabla u_{x_i}^\e} \abs{\f'(u^\e)} \,d\x \, ds\right].
\end{align*}
Notice that
$$
\abs{u_{x_i}^\e}\, \frac{1}{\rho} \chi_{[-\rho, \rho]}(u_{x_i}^\e) \to 0
\qquad \text{for a.e.~$(t,\x)$ almost surely  as $\rho\to 0$},
$$
and
\begin{align*}
	& \abs{u_{x_i}^\e}
	\frac{1}{\rho}\chi_{[-\rho, \rho]}(u_{x_i}^\e)
	\abs{\nabla u_{x_i}^\e} \abs{\f'(u^\e)}
	\\ & \qquad
	\le C \left(\abs{\nabla u_{x_i}^\e}^2
	+  \abs{u^\e}^{2(r-1)}\right),
\end{align*}
where the right-side term of the inequality is integrable and independent of $\rho>0$.
Then the dominated convergence theorem implies that
$|I_1|=0$.

Next we consider $I_2$.
By condition \eqref{eq:sigma-ass}
and estimate \eqref{2.4b}, we have
\begin{align*}
	\abs{\eta_\rho''(u^\e_{x_i})(\s'(u^\e)u^\e_{x_i})^2}
	&=\abs{\eta_\rho''(u^\e_{x_i})}\abs{|u^\e_{x_i}}^2(\s'(u^\e))^2
	\\ & \leq C \abs{u^\e_{x_i}} {\mathbf 1}_{\{|u^\e_{x_i}|<\rho\}}
	\leq C \abs{u^\e_{x_i}} \in L^1((0, T)\times\R^d).
\end{align*}
On the other hand, since $|u_{x_i}^\e|$ is integrable and independent of $\rho>0$ and
$$
\abs{u^\e_{x_i}} {\mathbf 1}_{\abs{|u^\e_{x_i}}<\rho\}} \to 0
\qquad \text{for a.e.~$(t,\x)$ almost surely as $\rho\to 0$},
$$
the dominated convergence
theorem again implies $|I_2|=0$.
This concludes the proof.
\end{proof}

\section{Uniform Temporal $L^1$--Continuity}\label{sec:timecont}

In this section, we establish the uniform temporal $L^1$--continuity
of $u^\e(t,\x)$, independent of the
viscosity coefficient $\e>0$.

\begin{theorem}[Temporal $L^1$--continuity]\label{thm:time-cont}
Suppose that \eqref{1.3}--\eqref{eq:sigma-ass} hold.
Let $u^\e(t,\x)$ be the solution of \eqref{2.1}--\eqref{2.2}.
Let $D\subset \R^d$ be a bounded domain in $\R^d$ and $T>0$ finite.
Then, for any small $\Delta t>0$, there exists
a constant $C>0$ independent of $\Delta t$ such that

\begin{eqnarray}
&&E\left[\int_0^{T-\Delta t}\int_D
	\abs{u^\e(t+\Delta t,\x)-u^\e(t,\x)}\, d\x dt\right]\nonumber\\[1.5mm]
&&\quad \leq C(\Delta t)^{1/3}\to 0
\qquad \text{as $\Delta t\to 0$}. \label{eq:time-cont}
\end{eqnarray}
\end{theorem}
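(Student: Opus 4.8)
The plan is to estimate the temporal increment by testing the mild formulation \eqref{2.4} against itself, or more directly by using the weak form of \eqref{2.1} over the time interval $[t,t+\Dt]$ together with the uniform spatial $BV$ bound from Theorem~\ref{thm:2.1}. The cleanest route is to mollify in space: introduce a standard mollifier $\omega_h(\x)$ at spatial scale $h>0$ and write
\[
	u^\e(t+\Dt,\x)-u^\e(t,\x)
	=\big(u^\e(t+\Dt,\cdot)-u^\e(t+\Dt,\cdot)*\omega_h\big)
	+\big(u^\e(t,\cdot)*\omega_h-u^\e(t,\cdot)\big)
	+\big(u^\e(t+\Dt,\cdot)-u^\e(t,\cdot)\big)*\omega_h.
\]
The first two \emph{regularization} differences are controlled purely by the spatial $BV$ estimate: since $\|v-v*\omega_h\|_{L^1}\le C\,h\,\abs{v}_{BV}$, each contributes at most $C\,h\,E[\abs{u^\e_0}_{BV(\R^d)}]$, uniformly in $\e$, after integrating in $t$ over $(0,T-\Dt)$ and using the $\Dt$-uniform bound on $T$. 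The genuinely stochastic work is confined to the third, mollified, term.

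For that term I would integrate \eqref{2.1} in time from $t$ to $t+\Dt$ against $\omega_h(\x-\cdot)$, yielding
\[
	\big(u^\e(t+\Dt,\cdot)-u^\e(t,\cdot)\big)*\omega_h(\x)
	=\underbrace{\int_t^{t+\Dt}\!\!\big(\e\Delta u^\e-\nabla\cdot\f(u^\e)\big)*\omega_h\,ds}_{\text{deterministic}}
	+\underbrace{\int_t^{t+\Dt}\!\!\s(u^\e)*\omega_h\,dW(s)}_{\text{stochastic}}.
\]
Moving the derivatives onto the mollifier, $\Delta u^\e*\omega_h=u^\e*\Delta\omega_h$ and $\nabla\cdot\f(u^\e)*\omega_h=\f(u^\e)*\nabla\omega_h$, the deterministic part is bounded in $L^1_{\x}$ by $C\,\Dt\,(\e\,h^{-2}+h^{-1})$ using $\|\Delta\omega_h\|_{L^1}\le Ch^{-2}$, $\|\nabla\omega_h\|_{L^1}\le Ch^{-1}$, the $L^p$ bound \eqref{2.3}, and the polynomial growth \eqref{eq:f-ass}. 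Here one must be slightly careful: $\e h^{-2}$ would blow up, so instead I would keep one derivative on $u^\e$ for the viscous term, writing $\e\Delta u^\e*\omega_h=\e\,\nabla u^\e*\nabla\omega_h$, whose $L^1_{\x}$ norm is controlled by $\e\,h^{-1}\abs{u^\e}_{BV}$ via Theorem~\ref{thm:2.1}; this gives a viscous contribution of order $\e\,\Dt\,h^{-1}$, harmless as $\e\to 0$. For the stochastic term I would take $L^1_{\x}$ norm, then expectation, and apply the Burkholder--Davis--Gundy inequality (recalled above) with $p=1$: the quadratic variation is $\int_t^{t+\Dt}\|\s(u^\e)*\omega_h\|^2\,ds$, so that, using $\s(0)=0$ and the Lipschitz bound \eqref{eq:sigma-ass} to get $\abs{\s(u^\e)}\le C\abs{u^\e}$ together with \eqref{2.3}, the expected $L^1_\x$ norm is of order $(\Dt)^{1/2}$, uniformly in $\e$ and $h$.

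Assembling the three pieces and integrating in $t$ over $(0,T-\Dt)$, the total bound is $C\big(h+\e\,\Dt\,h^{-1}+\Dt\,h^{-1}+(\Dt)^{1/2}\big)$. The sharp exponent comes from balancing the regularization error $h$ against the flux-increment error $\Dt\,h^{-1}$: choosing $h=(\Dt)^{1/2}$ makes both $O((\Dt)^{1/2})$, which would already give the rate $(\Dt)^{1/2}$ and is \emph{stronger} than the claimed $(\Dt)^{1/3}$. The reason the paper states only $(\Dt)^{1/3}$ is presumably that the stochastic term, handled with the BDG inequality at the level of the \emph{unmollified} $L^1_{\x}$ norm and then integrated in $t$, is more delicate: if one cannot freely exchange the spatial $L^1$ norm with the It\^o integral, one instead estimates $\s(u^\e)*\omega_h$ through a further mollification in the noise, producing an additional $h$-dependent stochastic term of order $(\Dt\,h^{-1})^{1/2}$ or an $L^2$-in-space detour costing a power, and the three-way balance $h\sim \Dt\,h^{-1}\sim(\Dt\,h^{-1})^{1/2}$ then yields $h=(\Dt)^{2/3}$ and the overall rate $(\Dt)^{1/3}$.

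The main obstacle is precisely this interaction between the spatial $L^1$ framework and the stochastic integral: BDG is naturally an $L^2_{\x}$ (Hilbert-space) statement, so passing the expectation of an $L^1_{\x}$ norm through the It\^o integral is not automatic and forces either an $L^1$--$L^2$ interpolation (paying with the bounded domain $D$ and a power of its measure) or a second mollification that re-introduces $h$ into the noise estimate. Controlling this stochastic term \emph{uniformly in $\e$} — so that the $\e$-dependence enters only through the benign viscous term $\e\,\Dt\,h^{-1}$ — while keeping the rate explicit, is the crux; the deterministic transport and regularization estimates are routine once the spatial $BV$ bound of Theorem~\ref{thm:2.1} is in hand.
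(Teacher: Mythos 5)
Your argument is correct and, modulo the standard technical points you already flag (stochastic Fubini to commute the mollifier with the It\^o integral, and passing BDG through the $L^1_\x$ norm via Cauchy--Schwarz on the bounded domain $D$), it actually proves the stronger rate $(\Delta t)^{1/2}$, which of course implies \eqref{eq:time-cont}. The paper's route is a different realization of the same Kruzkov interpolation idea: instead of mollifying the solution, it builds the test function $\varphi_\delta=J_\delta *\bigl(\sgn(w^\e)\chi_{D_{-\delta}}\bigr)$, i.e.\ it mollifies the sign of the increment, cut off inside $D$. This converts $\int_D|w^\e|$ into the linear functional $\int_D w^\e\varphi_\delta$ plus a commutator $I_4^\delta$, and makes the stochastic term a \emph{scalar} martingale $\Delta t\mapsto\int_t^{t+\Delta t}\bigl(\int_D\s(u^\e)\varphi_\delta\,d\x\bigr)dW(s)$, to which scalar BDG applies directly --- the same $\sqrt{\Delta t}$ you obtain. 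Your diagnosis of where the exponent $1/3$ comes from is therefore slightly off: it is not the $L^1$--$L^2$ interaction with BDG (the paper's stochastic term is also $O(\sqrt{\Delta t})$), but the commutator $I_4^\delta$, whose boundary strip $D\setminus D_{-2\delta}$ is controlled only through the $L^2$ bound and contributes $O(\sqrt{\delta})$; balancing $\Delta t/\delta$ against $\sqrt{\delta}$ then forces $\delta=(\Delta t)^{2/3}$ and the rate $(\Delta t)^{1/3}$. Your decomposition avoids this entirely because the identity $w^\e=(w^\e)*\omega_h+\bigl(u^\e(t+\Delta t)-u^\e(t+\Delta t)*\omega_h\bigr)-\bigl(u^\e(t)-u^\e(t)*\omega_h\bigr)$ lives on all of $\R^d$ and the regularization error is $O(h)$ by the uniform spatial $BV$ bound of Theorem~\ref{thm:2.1}, with no cut-off layer; the price is nil, and the balance $h\sim\Delta t/h$ gives $\sqrt{\Delta t}$. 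Both proofs rest on exactly the same three inputs (the $BV$ bound, the $L^p$/energy estimates \eqref{2.3}, and BDG), so yours is an admissible --- and quantitatively sharper --- alternative.
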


\begin{proof}
Fix $\Delta t>0$. For $t\in[0,T-\Delta t]$, set
$w^\e(t,\cdot):=u^\e(t+\Delta t,\cdot)-u^\e(t,\cdot)$.
Then, for any $\varphi\in L^\infty(0, T; C_0^\infty(D))$, we have
\begin{equation} \label{difference}
	\begin{split}
		& \int_D w^\e(t, \x)\varphi(t,\x)\, d\x\\
		&=\int_D \Big(\int_{t}^{t+\Delta t}\partial_s u^\e(s,\x)ds\Big)\varphi(t,\x)\,d\x
		\\ &= \int_t^{t+\Delta t}
		\int_D \f(u^\e(s,\x))\cdot\nabla\varphi(t,\x)\, d\x\, ds
		\\ & \qquad -\e\int_t^{t+\Delta t}\int_D
		\nabla u^\e(s,\x)\cdot \nabla\varphi(t, \x)\, d\x\, ds\\
		&\qquad\qquad +\int_t^{t+\Delta t}\int_D
		\s(u^\e(s,\x))\varphi(t, \x)\,d\x\, dW(s).
	\end{split}
\end{equation}

For each $t\in[0,T-\Delta t]$, take $\delta>0$, set
$D_{-\delta}:=\{\x\in D\, :\, \text{dist}(\x,\d D)\geq\delta\}$,
and denote by $\chi_{D_{-\delta}}(\cdot)$ its
characteristic function.

Let $J\in C_c^\infty(\R^d)$ be the standard mollifier defined by
\begin{equation}\label{3.1}
	\begin{split}
		J(\x)=
		\begin{cases}
			C\exp\left(\frac{1}{|\x|^2-1}\right)
			\quad & \text{if $|\x|<1$},\\
			0 & \text{if $|\x|\geq 1$},
		\end{cases}
	\end{split}
\end{equation}
where the constant $C>0$ is chosen so that
$\int_{\R^d}J(\x)d\x=1$.
For each $\delta>0$, we take
$$
\varphi:=\varphi_\delta(t,\x)
=\delta^{-d}\int_{\R^d} J(\tfrac{\x-\y}{\delta})
\sgn\left(w(t, \y)\right)\chi_{D_{-\delta}}(\y)\, d\y
$$
in \eqref{difference}. It is clear that
$\norm{\varphi_\delta}_{L^\infty(D)}
+\delta\norm{\nabla\varphi_\delta}_{L^\infty(D)}\leq C$,
uniformly in $t$, for some constant $C>0$
independent of $\delta>0$.

Integrating \eqref{difference} in $t$  from $0$ to $T-\Delta t$ yields
\begin{equation*}
	\begin{split}
		&\int_0^{T-\Delta t}\int_D \abs{w^\e(t,\x)}\, d\x dt
		\\ & =\int_0^{T-\Delta t}\int_t^{t+\Delta t}
		\int_D \f(u^\e(s,\x))\cdot\nabla\varphi_\delta(t,\x)\, d\x\, ds\, dt
		\\ & \quad -\int_0^{T-\Delta t}\int_t^{t+\Delta t}
		\int_D\e \nabla u^\e(s,\x)\cdot \nabla\varphi_\delta(t, \x)\, d\x\, ds\, dt
		\\ & \quad +\int_0^{T-\Delta t}\left(\int_t^{t+\Delta t}
		\Big(\, \int_D \s(u^\e(s,\x))\varphi_\delta(t, \x) \, d\x \Big) dW(s)\right) dt
		\\ & \quad+\int_0^{T-\Delta t}\int_D w^\e(t, \x)\big(w^\e(t,\x)
		-\varphi_\delta(t,\x)\big) \, d\x dt\\
		&:= \sum_{j=1}^4 I_j^\delta.
	\end{split}		
\end{equation*}

We examine these parts separately.

Thanks to the polynomial growth of $\f$ and \eqref{2.3},
\begin{equation*}
	\abs{E\left [I_1^\delta\right ]}
	\leq C\frac{\Delta t}{\delta}\norm{\f}_{L^1(D\times (0,T))}
	\leq C(T,D)\frac{\Delta t}{\delta}.
\end{equation*}

For the term $I_{2}^\delta$,  we have
\begin{equation*}
	\begin{split}
		\abs{E\left [I_2^\delta\right ]}
		&\le C\left( E\left[\int_0^{T-\Delta t}
		\int_D \left(\int_t^{t+\Delta t}\sqrt{\e}|\nabla u^\e(s,\x)|\, ds\right)^2
		\, d\x\, dt\right]\right)^{\frac{1}{2}}
		\\ & \qquad \qquad \times
		\left(E\Big[\int_0^{T-\Delta t}\int_D \e |\nabla \varphi_\delta |^2\, d\x\, ds\Big]
		\right)^{\frac{1}{2}} \\
		&\leq  C \Delta t\left( E\Big[\int_0^{T-\Delta t}
		\int_D |\nabla \varphi_\delta|^2 d\x\, ds\Big]\right)^{\frac{1}{2}}\\
		&\leq  C(T,D)\frac{\Delta t}{\delta},
	\end{split}
\end{equation*}
where the second inequality follows from the energy estimate \eqref{2.3}:
$$
\sup_{\e>0} E\left[\e\int_0^T
\norm{\nabla u^\e(t,\x)}_{L^2(\R^d)}^2\, dt\right]<\infty.
$$

For the term $I_3^\delta$, by the Burkholder-Davis-Gundy inequality applied
to the martingale $0\le \Delta t\mapsto\int_t^{t+\Delta t}
\left (\, \int_D \s(u^\e(s,\x))\varphi_\delta(t, \x) \, d\x \right) dW(s)$,
we have
\begin{equation*}
	\begin{split}
		\abs{E\left [I_3^\delta\right ]}
		& \leq C \int_0^{T-\Delta t}
		E\left[\left(\int_t^{t+\Delta t} \left (\, \int_D \s(u^\e(s,\x)) \varphi_\delta(t,\x)
		\, d\x \,\right  )^2\, ds\right)^{\frac{1}{2}}\right]\, dt
		\\ & \leq  C\left(E\left[\int_0^{T-\Delta t}\int_t^{t+\Delta t}
		\int_D \big(\s(u^\e(s,\x) \varphi_\delta(t,\x)\big)^2
		\, d\x\, ds\, dt\right ]\right)^{\frac{1}{2}}
		\\ & \leq  C\left(E\left[\int_0^{\Delta t}\int_0^{T-\Delta t}
		\int_D \big(\s(u^\e(s+t,\x)\big)^2
		\, d\x\, dt\, ds\right ]\right)^{\frac{1}{2}}
		\\ & \leq C\sqrt{\Delta t}
		\left( E\left[\int_0^{T}
		\int_D \big(\s(u^\e(t,\x)\big)^2 d\x\, dt \right]\right)^{\frac{1}{2}}
		\\ & \leq C\sqrt{\Delta t}
		\left( E\left[\int_0^{T}
		\int_D |u^\e(t,\x)|^2 d\x\, dt \right]\right)^{\frac{1}{2}}\\
		& \leq C\sqrt{\Delta t},
	\end{split}
\end{equation*}
where we have used that $\sup_{\e>0} E\left[\|u^\e(t)\|_2^2\right]<\infty$,
uniformly in $t> 0$.

This $L^2$--bound also implies
\begin{align*}
	& E\left[\int_0^T\int_{D\setminus D_{-2\delta}} |u^\e(t,\x)|\, d\x \, dt\right]
	\\ & \quad
	\le C\, \left(E\left[\|u^\e\|_2^2\right]\right)^{\frac{1}{2}}
	\left(E\big[\int_0^T\int_{D\setminus D_{-2\delta}}\, d\x\, dt\big]
	\right)^{\frac{1}{2}}\\
&\quad \le C \sqrt{\delta}.
\end{align*}
Hence,
\begin{align*}
	&\abs{ E\left[I_4^\delta\right]}\\
	&\le  2E\left [\int_0^{T-\Delta t}
	\int_{D\setminus D_{-2\delta}} |w(t,\x)|\, d\x\, dt\right ]\\
	&\quad
	+ E\Biggl[\int_0^{T-\Delta t}
	\int_{D_{-2\delta}}\Bigl | \, |w(t,\x)|
	-w(t,\x) \int_{\R^d} \delta^{-d}J(\tfrac{\x-\y}{\delta})
	\sgn\big(w(t,\y)\big)\Bigr | \, d\y\, d\x\, dt\Biggr]
	\\ & \leq C\sqrt{\delta}\\
	&+E\Biggl[\int_0^{T-\Delta t}
	\int_{D_{-2\delta}}\int_{\R^d}
	\delta^{-d}J(\tfrac{\x-\y}{\delta})
	\Bigl | \,|w(t,\x)|-w(t,\x)\sgn(w(t,\y))\Bigr|
	\, d\y\, d\x\, dt \Biggr]
	\\ & \le C\sqrt{\delta}
	\\ & \quad
	+CE\left[\int_0^{T-\Delta t}\int_{D_{-2\delta}}
	\int_{\R^d} \delta^{-d}J(\tfrac{\x-\y}{\delta})
	|w(t,\x)- w(t,\y)|\, d\y\, d\x\, dt\right]
	\\ & \le
	C\sqrt{\delta}
	+CE\left[\int J(z)\int_0^{T}\int_{D_{-2\delta}}
	|u^\e(t,\x)-u^\e(t,\x-\delta z)|\, d\x\, dt \, dz\right]
	\\ & \le  C\delta^{\frac{1}{2}}+4\delta
	\leq  C\sqrt{\delta},
\end{align*}
where the third inequality follows from
$\big||a|-a \sgn(b)\big|\le 2|a-b|$ for any $a,b\in\R$.
The fifth inequality follows,
since $u^\e$ belongs to $BV$ in $\x$.

Setting $\rho(\Delta t)=\inf_{\delta>0}\left\{C_1\frac{\Delta t}{\delta}
+C_2(\Delta t)^{\frac{1}{2}}
+C_3\delta^{\frac{1}{2}}\right\}$, it follows that
$$
\int_0^{T-\Delta t}\int_D
|w(t,\x)| \,d\x dt \le \rho (\Delta t).
$$
The function $\rho(\cdot)$ reaches the
infimum at $\delta=C(\Delta t)^{\frac{2}{3}}$,
and hence
$$
\int_0^{T-\Delta t}
\int_D |w(t,\x)| \,d\x \,dt
\le C(\Delta t)^{\frac{1}{3}}\to 0
\quad \text{as $\Delta t\to 0$.}
$$
\end{proof}

\begin{remark}
Since Brownian sample paths
are $\alpha$-H\"older continuous
for every $\alpha<\frac{1}{2}$, a fractional
order in the temporal $L^1$--continuity
in \eqref {eq:time-cont} is expected. The proof
of Theorem \ref{thm:time-cont} uses an
idea due to Kruzkov \cite{Kruzkov}.
\end{remark}

\medskip
\section{Well-Posedness Theory in  $L^p$}\label{sec:exist-LpBV}

Before we introduce the relevant notions of
generalized solutions, let us define what is
meant by an entropy-entropy flux pair $(\eta, \q)$, or
more simply an entropy pair, namely a $C^2$
function $\eta:\R\to\R$ such that $\eta', \eta''$ have at
most polynomial growth, with corresponding
entropy flux $\q$ defined by $\q'(u)=\eta'(u) \f'(u)$.
An entropy pair is called convex if $\eta''(u)\ge 0$.

\begin{definition}[Stochastic entropy solutions]\label{def:ES}
A $\Set{\mathcal{F}_t}_{t\ge 0}$--adapted, $L^2(\R^d)$--valued stochastic
process $u=u(t,\x;\omega)$ is a \textit{stochastic entropy solution}
of the balance law \eqref{1.1} with initial data \eqref{1.2} provided
that the following conditions hold:

\begin{enumerate}
	\item [(i)] for $p=1,2,\cdots$,
	$$
	\sup_{0\le t\le T} E\left[\norm{u(t)}_{L^p(\R^d)}^p\right]<\infty,
	\quad \text{for any $T>0$};
	$$

	\item [(ii)] for any convex entropy pair $(\eta, \q)$ and any $0<s<t$,
	\begin{equation*}
		\begin{split}
			& -\left(\, \int_{\R^d} \eta(u(t,\x))\varphi(\x) \, d\x
			- \int_{\R^d} \eta(u(s,\x))\varphi(\x) \, d\x\,  \right)
			\\ & \qquad
			+ \int_s^t \int_{\R^d} \q(u(\tau,\x))\cdot\nabla \varphi \,d\x\,d\tau
			\\ & \qquad \quad
			+ \int_s^t \int_{\R^d} \frac12 \eta''(u(\tau,\x))
			\big(\sigma(u(\tau,\x))\big)^2\varphi \,d\x\,d\tau
			\\ & \qquad\quad\quad
			+ \int_s^t \left(\, \int_{\R^d} \eta'(u(\tau,\x))
			\sigma(u(\tau,\x)) \varphi \,d\x\, \right) dW(\tau) \ge 0,
		\end{split}
	\end{equation*}
	for all $\varphi\in C^\infty_c(\R^d)$, $\varphi\ge 0$, where
	$\int_s^t \left(\cdots\right)\, dW(\tau)$ is an Ito integral.
\end{enumerate}
\end{definition}

To motivate the next definition, let us make a formal attempt to derive the
$L^1$--contraction property for stochastic entropy solutions. To this end,
consider smooth (in $x$) solutions to the one-dimensional problems:
$$
du + \px f(u)\, dt = \sigma(u) \, dW, \qquad u|_{t=0}=u_0,
$$
$$
dv + \px f(v)\, dt = \sigma(v) \, dW, \qquad v|_{t=0}=v_0.
$$
Subtracting the two stochastic conservation laws yields
$$
d(u-v) = - \left[\px (f(u)-f(v))\right]\, dt+ \left[\sigma(u)-\sigma(v)\right] \, dW.
$$
Let $\eta(\cdot)$ be an entropy. An application
of the chain rule (Ito's formula) now yields
\begin{align*}
	d\eta(u-v) & =
	\Biggl[ -\px \bigl(\eta'(u-v) (f(u)-f(v))\bigr)
	\\ & \qquad \qquad
	+ \eta''(u-v)\big(f(u)-f(v)\big)\px (u-v)
	\\ & \qquad\qquad\qquad
	+\frac12\eta''(u-v) \big(\sigma(u)-\sigma(v)\big)^2  \Biggr] \, dt
	\\ & \qquad \qquad\qquad \qquad
	+ \eta'(u-v)\big(\sigma(u)-\sigma(v)\big)\, dW,
\end{align*}
where the last term is a martingale. Choosing $\eta(\cdot)=\abs{\cdot}$ yields
$\eta''(\cdot)=\delta_0$ and the two ``$\eta''$ terms" vanish.
Consequently, after integrating and taking expectations, we arrive at the
$L^1$--contraction (conservation) principle:
$$
E\left[\int |u(t)-v(t)|\,dx\right]
 =E\left[\int |u_0-v_0|\,dx\right].
$$

Of course, for non-smooth solutions, the Ito formula is not available and
we should instead derive the $L^1$--contraction principle
from the (stochastic) entropy inequalities via Kruzkov's method.

Attempting precisely that, we
write the entropy condition for $u(t)=u(t,x;\omega)$ with the entropy
$\eta(u(t)-v(s,y;\omega))$, where $v(s,y;\omega)$ is being treated as a constant
with respect to $(t,x)$. Similarly, write the entropy condition
for $v(s)=v(s,y;\omega)$ for the entropy $\eta(v(s)-u(t,x;\omega))$, with
$u(t,x;\omega)$ being constant with respect to $(s,y)$.
Take $\eta(\cdot)=\abs{\cdot}$, and
then $q(u,v)=\sgn(u-v)(f(u)-f(v))$. After adding
together the two entropy inequalities, we formally obtain
\begin{align*}
	&(d_t+d_s)|u-v|
	\\ & \quad
	\le \Big[- \left(\px+\py\right) \bigl(\sgn(u-v)(f(u)-f(v))\bigr)
	\\ & \qquad \qquad
	+\frac12\delta(u-v)\left[\left(\sigma(u)\right)^2
	+\left(\sigma(v)\right)^2\right]  \Big]\, dt\, ds
	\\ & \qquad\qquad\qquad
	+ \sgn(u(t,x)-v(s,y)) \sigma(u(t,x))\, dW(t)\, ds
	\\ & \qquad\qquad\qquad
	- \sgn(u(t,x)-v(s,y)) \sigma(v(s,y))\, dW(s)\, dt.
\end{align*}

Depending on $t<s$ or $t>s$, one of the last two terms are not
adapted, and this causes a problem for the Ito integral. In particular, by taking
the expectation of the above inequality, only one of the last two terms vanishes.
Moreover, to write
$\frac12\delta(u-v)\left[\left(\sigma(u)\right)^2+\left(\sigma(v)\right)^2\right]$
in the favorable form:
$$
\frac12\delta(u-v)\left(\sigma(u)-\sigma(v)\right)^2,
$$
we are missing the cross term $2\sigma(u)\sigma(v)$.
These difficulties can be effectively handled by
the notion of ``strong" stochastic entropy solutions.

\begin{definition}[Strong stochastic entropy solutions]\label{def:strong-ES}
An $\Set{\mathcal{F}_t}_{t\ge 0}$--adapted, $L^2(\R^d)$--valued stochastic
process $u=u(t)=u(t,\x;\omega)$ is a \textit{strong stochastic entropy solution}
of the balance law \eqref{1.1} with initial data \eqref{1.2} provided
$u$ is a stochastic entropy solution, and
the following additional condition holds:
\begin{enumerate}
	\item[(iii)] for each $\Set{\mathcal{F}_t}_{t\ge 0}$--adapted, $L^2(\R)$--valued
	stochastic process $\tilde{u}=\tilde{u}(t)=\tilde{u}(t,\x;\omega)$ satisfying
	$$
	\sup_{0\le t\le T} E\left[\norm{\tilde{u}(t)}_{L^p(\R^d)}^p\right]<\infty
	\qquad
	\text{for any $T>0$, $p=1,2,\cdots$,}
	$$
	and for each entropy function $S:\R\to\R$, with
	$$
	\overline{S}(r;v,\y)
	:=\int_{\R^d} S'(\tilde u(r,\x)-v)
	\sigma(\tilde u(r,\x))\varphi(\x,\y)\, d\x,
	$$
	where $r\ge 0$, $v\in\R$, $\y\in \R^d$, and
	$\varphi \in C^\infty_c(\R^d\times\R^d)$, there exists a
	deterministic function $\Delta(s,t)$, $0\le s\le t$, such that
	\begin{align*}
		\qquad\qquad
		& E\left[\int_{\R^d} \int_s^t \overline{S}(\tau; v=u(t,\y),\y)
		\, dW(\tau)\, d\y  \right]
		\\ & \le E\left[ \int_s^t \int_{\R^d}
		\partial_v \overline{S}(\tau; v=\tilde{u}(\tau,\y),\y)
		\sigma(u(\tau,\y))\, d\y \, d\tau \right] + \Delta(s,t),
	\end{align*}
	where $\Delta(\cdot,\cdot)$ is such that, for each $T>0$, there exists
	a partition $\Set{t_i}_{i=1}^m$ of $[0,T]$, $0=t_0<t_1<\dots < t_m=T$,
	so that
	$$
	\lim_{\max\limits_i \abs{t_{i+1}-t_i}}
	\sum_{i=1}^m \Delta(t_i,t_{i+1})=0.
	$$
\end{enumerate}
\end{definition}

The notion of strong stochastic entropy solutions is due to
Feng-Nualart \cite{FN}, who proved the
$L^1$--contraction property for these solutions:
\begin{equation}\label{eq:FN-contr}
	E\left[\|u(t)-v(t)\|_{L^1(\R^d)}\right]
	\le E\left[\|u_0-v_0\|_{L^1(\R^d)}\right]
	\qquad \text{for $t>0$,}
\end{equation}
where $u(t)$ is any stochastic entropy solution with $u|_{t=0}=u_0$ and $v(t)$ is
any \textit{strong} stochastic entropy solution with $v|_{t=0}=v_0$.
In \eqref{eq:FN-contr}, the entropy $\abs{\cdot}$ can be replaced by
$(\cdot)^+$, yielding the $L^1$--comparison principle.

Feng-Nualart \cite{FN} employed the compensated compactness
method to prove an existence result in the one-dimensional context.
The following theorem provides the existence of
strong stochastic entropy solutions for a
class of multidimensional equations.

\begin{theorem}[Existence in $L^p\cap BV$]\label{thm:existence-BV}
Suppose that \eqref{1.3}--\eqref{eq:sigma-ass} hold.
Then there exists a strong stochastic
entropy solution $u$ of the balance law \eqref{1.1}
with initial data \eqref{1.2}, satisfying
\begin{equation}\label{eq:Lp-BV}
	E\left[|u(t,\cdot)|_{BV(\R^d)}\right]
	\le E\left[\abs{u_0}_{BV(\R^d)}\right]
	\qquad \text{for any $t\ge 0$.}
\end{equation}
\end{theorem}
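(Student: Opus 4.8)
The plan is to construct $u$ as a vanishing viscosity limit of the smooth solutions $\ueps$ of \eqref{2.1}--\eqref{2.2}, using the two uniform estimates proved above as the source of compactness. The spatial $BV$ bound of Theorem \ref{thm:2.1} controls spatial translates,
$$
E\Big[\int_0^T\!\!\int_D\abs{\ueps(t,\x+\mathbf{h})-\ueps(t,\x)}\,d\x\,dt\Big]
\le \abs{\mathbf{h}}\,T\sup_{0\le t\le T}E\big[\abs{\ueps(t)}_{BV(\R^d)}\big]\le C\abs{\mathbf{h}},
$$
while Theorem \ref{thm:time-cont} supplies the temporal modulus $C(\Dt)^{1/3}$ in \eqref{eq:time-cont}; together with the uniform integrability furnished by \eqref{2.3} at $p=2$, these estimates show that the laws of $\ueps$ are tight on $L^1((0,T)\times D)$ for each bounded $D$ and finite $T$. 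Indeed, the subsets of $L^1((0,T)\times D)$ on which the integrated spatial $BV$ seminorm and the temporal $L^1$--modulus of continuity are uniformly bounded are compact by the Kolmogorov--Riesz criterion, and their complements carry uniformly small probability by Chebyshev's inequality. By Prokhorov's theorem and a Skorokhod representation, I would pass to a subsequence and a (possibly enlarged) probability space on which $\ueps\to u$ almost surely in $L^1_{\text{loc}}$ and almost everywhere in $(t,\x)$.

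I would next verify that the limit $u$ is a stochastic entropy solution in the sense of Definition \ref{def:ES}. Condition (i) is immediate from \eqref{2.3} and Fatou's lemma, and the $BV$ estimate \eqref{eq:Lp-BV} follows from Theorem \ref{thm:2.1} together with the lower semicontinuity of the $BV$ seminorm under $L^1$ convergence. For condition (ii), Ito's formula applied to $\eta(\ueps)$ for a convex entropy pair $(\eta,\q)$ yields the viscous entropy balance; the viscous term contributes $-\e\,\eta''(\ueps)\abs{\nabla\ueps}^2\le 0$ and a pure $\e$--gradient term that vanishes as $\e\to0$ by the energy estimate in \eqref{2.3}. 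The deterministic flux term and the Ito correction $\tfrac12\eta''(\ueps)\s(\ueps)^2$ converge under the strong $L^1$ convergence, using the polynomial growth \eqref{eq:f-ass} and the Lipschitz bound \eqref{eq:sigma-ass} with the $L^p$--bounds \eqref{2.3} to secure the required uniform integrability. For the stochastic integral, the integrand $\eta'(\ueps)\s(\ueps)\varphi$ converges in $L^2(\Omega\times(0,T))$ by strong convergence and the Lipschitz property of $\s$, so the Ito isometry lets the limit pass through, producing the martingale term in Definition \ref{def:ES}(ii) with the correct inequality sign inherited from the nonnegative viscous dissipation.

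The main obstacle is the strong entropy condition (iii) of Definition \ref{def:strong-ES}. Since each $\ueps$ is smooth, Ito's formula is genuinely applicable, so one can compute the quantity $E[\int_{\R^d}\int_s^t\overline{S}(\tau;v=\ueps(t,\y),\y)\,dW(\tau)\,d\y]$ explicitly; the difficulty is that its integrand is frozen at the terminal time $v=\ueps(t,\y)$ and is therefore not adapted, so the Ito integral must be understood as a partition limit. Following \cite{FN}, the strategy is to refine a partition $\Set{t_i}$ of $[s,t]$, split each subinterval contribution into an adapted part---which by the Ito product rule reproduces exactly the $\partial_v\overline{S}$ term on the right-hand side of (iii)---and an anticipating remainder bounded by a deterministic defect $\Delta^\e(t_i,t_{i+1})$ that is controlled uniformly in $\e$ through \eqref{2.3} and \eqref{eq:sigma-ass} and vanishes as the mesh tends to zero. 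The delicate step is to make these martingale estimates uniform in $\e$ and then to pass to the limit $\e\to0$ using the strong convergence $\ueps\to u$, so that $u$ inherits inequality (iii) with a limiting defect $\Delta(s,t)$ obeying the partition-sum condition. Controlling this anticipation defect uniformly through the vanishing viscosity limit is the principal technical hurdle; once (iii) is in hand, $u$ is a strong stochastic entropy solution, and combining the construction with the $L^1$--contraction \eqref{eq:FN-contr} of \cite{FN} completes the proof.
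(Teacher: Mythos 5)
Your proposal follows essentially the same route as the paper: vanishing viscosity approximation, compactness from the uniform spatial $BV$ estimate of Theorem \ref{thm:2.1} together with the temporal $L^1$--equicontinuity of Theorem \ref{thm:time-cont}, passage to the limit in the viscous entropy inequality, and an appeal to the Feng--Nualart framework for the strong entropy condition (iii). In fact you supply more detail than the paper does on the two points it leaves implicit (the tightness/Skorokhod mechanism behind the a.e.\ almost-sure convergence, and the verification of Definition \ref{def:strong-ES}(iii)), so the argument is consistent with, and a faithful elaboration of, the paper's proof.
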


\begin{proof}
For fixed $\e>0$, we mollify $u_0$ by $u_0^\e\in C^\infty$ so that
$E\left[\|u_0^\e\|_{H^s(\R^d)}^2\right]$ is finite for any $s>0$, and
$$
E\left[\|u_0^\e\|_{L^p(\R^d)}^p +|u^\e_0|_{BV(\R^d)}\right]
\le E\left[\|u_0\|_{L^p(\R^d)}^p + |u_0|_{BV(\R^d)}\right]<\infty,
$$
for any $p=1,2, \cdots$, and $u_0^\e(\x)\to u_0(\x)$
for a.e.~$\x$, almost surely as $\e\to 0$.

Now the same arguments as in Section 4
of Feng-Nualart \cite{FN} yield that
there exists an $\mathcal{F}_t$--adapted
stochastic process $\ueps=u^\e(t)\in
C([0,\infty); L^2(\R^d))$
satisfying almost surely that
\begin{enumerate}
\item[(i)] $E\left[\|u^\e(t,\cdot)\|_{H^s(\R^d)}^2\right]<\infty$ for all $t>0$;

\smallskip
\item[(ii)] $\partial_{x_ix_j}u^\e(t, \cdot)\in C(\R^d)$ for all $i,j=1, \ldots, d$;

\smallskip
\item[(iii)] For any $\varphi\in C_c^\infty(\R^d), \varphi\ge 0$,
and $0<s<t$,
\begin{align*}
	& \langle\eta(u^\e(t,\cdot)), \varphi\rangle-\langle\eta(u^\e(s,\cdot)), \varphi\rangle
	\\ & =
	\int_s^t\left \langle\q(u^\e(\tau,\cdot)), \nabla\varphi \right\rangle \, d\tau
	+\frac{1}{2}\int_s^t\left \langle\eta''(u^\e(\tau, \cdot))
	(\sigma(u^\e(\tau,\cdot))^2, \varphi \right \rangle \, d\tau
	\\ & \qquad +\int_s^t\left \langle\eta'(u(\tau,\cdot))\sigma(u(\tau, \cdot)), \varphi
	\right \rangle \, dW(\tau)
	\\ & \qquad \qquad+\e\int_s^t\Bigl(\left\langle \eta(u^\e(\tau,\cdot)),
	\Delta\varphi\right\rangle
	-\left\langle\eta''(u^\e(\tau,\cdot))|\nabla u^\e(\tau,\cdot)|^2,
	\varphi\right \rangle\Bigr)\, d\tau
	\\ & \le \int_s^t\left\langle\q(u^\e(\tau,\cdot)), \nabla\varphi\right\rangle\, d\tau
	+\frac{1}{2}\int_s^t\left \langle \eta''(u^\e(\tau, \cdot))
	(\sigma(u^\e(\tau,\cdot))^2, \varphi\right \rangle\, d\tau
	\\ & \qquad +\int_s^t\left \langle\eta'(u^\e(\tau,\cdot))
	\sigma(u(\tau, \cdot)), \varphi\right \rangle\, dW(\tau)
	+\mathcal{O}(\e),
\end{align*}
\end{enumerate}
where the first equality in (iii) follows from the Ito formula.

Combining the results established in
Sections \ref{sec:BV} and \ref{sec:timecont}, we conclude that
there exist a subsequence (still denoted) $\{u^\e(t,\x)\}_{\e>0}$ and
a limit $u(t,\x)$ such that as $\e\to 0$,
\begin{equation*}
	\text{$u^\e(t,\x)\to u(t,\x)\qquad$ for a.e.~$(t,\x)$, almost surely},
\end{equation*}
and the limit $u(t,\x)$ satisfies \eqref{eq:Lp-BV}. Arguing as
in Feng-Nualart \cite{FN}, we can pass to the limit
in the entropy inequality (iii) to conclude that
the limit function $u(t,\x)$ is a stochastic entropy
solution (cf.~Definition \ref{def:ES}).
Moreover, we can prove that $u$ is
a \textit{strong} stochastic entropy
solution, as defined in Definition \ref{def:strong-ES}.
\end{proof}

Combining Theorem 4.1 with the $L^1$--stability
result established in Feng-Nualart \cite{FN}, we conclude

\begin{theorem}[Well-posedness in $L^p$]
Suppose \eqref{eq:f-ass} and \eqref{eq:sigma-ass} hold, and
that $u_0$ satisfies
\begin{equation*}
	E\left[\|u_0\|_{L^p(\R^d)}^p\right]<\infty,
	\quad p=1,2, \cdots.
\end{equation*}

\begin{enumerate}
\item[(i)] Existence: There exists a strong stochastic entropy
solution of the balance law \eqref{1.1}
with initial data \eqref{1.2}, satisfying for any $t\ge 0$,
\begin{equation}\label{5.2}
	E\left[\|u(t,\cdot)\|_{L^p(\R^d)}^p\right]
	<\infty, \qquad p=1,2, \cdots.
\end{equation}

\item[(ii)] Stability:  Let $u(t,\x)$ be a strong stochastic entropy
solution of \eqref{1.1} with initial data $u_0(\x)$, and
let $v(t,\x)$ be a stochastic entropy solution
with initial data $v_0(\x)$. Then, for any $t>0$,
\begin{equation}\label{eq:L1-stab}
	E\left[\int_{\R^d} \abs{u(t,\x)-v(t,\x)}\,d\x\right]
	\le E\left[\int_{\R^d} \abs{u_0(\x)-v_0(\x)}\,d\x\right].
\end{equation}
\end{enumerate}
\end{theorem}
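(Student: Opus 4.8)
The plan is to reduce the general $L^p$ statement to the $L^p\cap BV$ existence result already established in Theorem~\ref{thm:existence-BV}, by combining a density argument with the Feng--Nualart $L^1$-contraction principle \eqref{eq:FN-contr}; the stability assertion (ii) will then be essentially immediate. I begin with the stability part since it is the shorter of the two. By hypothesis $u$ is a \emph{strong} stochastic entropy solution with data $u_0$ and $v$ is a stochastic entropy solution with data $v_0$, so applying \eqref{eq:FN-contr} with the roles of the two solutions interchanged (the entropy $\abs{\cdot}$ and the estimate are symmetric in $u_0,v_0$) yields at once
\[
E\left[\int_{\R^d}\abs{u(t,\x)-v(t,\x)}\,d\x\right]
\le E\left[\int_{\R^d}\abs{u_0(\x)-v_0(\x)}\,d\x\right],
\]
which is \eqref{eq:L1-stab}. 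Thus no new work is needed for (ii).

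For the existence part (i), I would first approximate the $L^p$ initial datum by data in $L^p\cap BV$. For each $n$, set $u_0^n$ to be a truncation-plus-mollification of $u_0$, so that $u_0^n\in L^p\cap BV(\R^d)$ almost surely, $E\left[\|u_0^n\|_{L^p(\R^d)}^p+\abs{u_0^n}_{BV(\R^d)}\right]<\infty$, $E\left[\|u_0^n\|_{L^p(\R^d)}^p\right]\le E\left[\|u_0\|_{L^p(\R^d)}^p\right]$ for every $p$, and $E\left[\|u_0^n-u_0\|_{L^1(\R^d)}\right]\to0$ (the $L^1$ control uses $p=1$ in the hypothesis). Theorem~\ref{thm:existence-BV} then furnishes, for each $n$, a strong stochastic entropy solution $u^n$ with data $u_0^n$, enjoying the uniform $L^p$ bounds inherited from the vanishing-viscosity construction (cf.~\eqref{2.3}), namely $\sup_{0\le t\le T}E\left[\|u^n(t)\|_{L^p(\R^d)}^p\right]\le C\,E\left[\|u_0\|_{L^p(\R^d)}^p\right]$ uniformly in $n$ for each $p$. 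Since $u^n$ and $u^m$ are both strong solutions, \eqref{eq:FN-contr} gives
\[
E\left[\|u^n(t)-u^m(t)\|_{L^1(\R^d)}\right]
\le E\left[\|u_0^n-u_0^m\|_{L^1(\R^d)}\right]\longrightarrow0
\quad\text{uniformly in }t,
\]
so $\{u^n\}$ is Cauchy in $C([0,T];L^1(\Omega\times\R^d))$ and converges to some limit $u$, with $u^n\to u$ for a.e.~$(t,\x)$, almost surely, along a subsequence. The uniform $L^p$ bounds together with Fatou's lemma then yield \eqref{5.2}.

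It remains to verify that $u$ is a strong stochastic entropy solution with data $u_0$. I would pass to the limit in the entropy inequality of Definition~\ref{def:ES}(ii) written for $u^n$: the deterministic terms $\langle\eta(u^n),\varphi\rangle$, $\langle\q(u^n),\nabla\varphi\rangle$, and $\langle\tfrac12\eta''(u^n)\s(u^n)^2,\varphi\rangle$ converge by the a.e.\ convergence of $u^n$, the polynomial growth of $\eta,\q,\eta''$, the Lipschitz bound \eqref{eq:sigma-ass} on $\s$, and the uniform $L^p$ bounds (which provide the equi-integrability needed for a Vitali/dominated convergence argument, using the compact support of $\varphi$). For the Ito term $\int_s^t\!\big(\int_{\R^d}\eta'(u^n)\s(u^n)\varphi\,d\x\big)dW(\tau)$, the Lipschitz control on $\s$ and growth of $\eta'$ combined with the $L^p$ bounds show that the integrands converge in $L^2(\Omega\times(s,t))$, so by the Ito isometry the stochastic integrals converge (passing to a further subsequence if needed). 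This identifies $u$ as a stochastic entropy solution, and the convergence of $E\left[\|u^n(t)-u_0^n\|_{L^1}\right]\to0$ as $t\to0$, uniform in $n$, secures the initial condition. Finally, to upgrade $u$ to a \emph{strong} solution I would argue exactly as in Feng--Nualart~\cite{FN}, checking that condition (iii) of Definition~\ref{def:strong-ES} is stable under this $L^1$/a.e.\ limit with uniform $L^p$ control, the defect functions $\Delta^n(s,t)$ of the $u^n$ admitting a limiting $\Delta(s,t)$ with the required vanishing-sum property.

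The main obstacle is precisely this last step, the verification of condition (iii): establishing that the \emph{strong} entropy structure survives the passage to the limit is considerably more delicate than the convergence of the plain entropy inequality, because it involves the non-adapted correction terms and an Ito integral that is not amenable to deterministic dominated convergence and must instead be controlled through uniform $L^2$-type estimates on the integrands. The uniform $L^p$ bounds for \emph{all} $p$ are what make the polynomial flux and entropy-flux terms pass to the limit, and they are indispensable here.
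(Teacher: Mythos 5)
Your proposal is correct and follows essentially the same route as the paper: approximate $u_0$ in $L^1$ by data in $L^p\cap BV$, invoke Theorem~\ref{thm:existence-BV} for each approximant, use the Feng--Nualart $L^1$-contraction between strong solutions to get a Cauchy sequence, and pass to the limit using the uniform $L^p$ bounds, with part (ii) quoted directly from \cite{FN}. Your discussion of passing to the limit in the entropy inequality and of verifying condition (iii) of Definition~\ref{def:strong-ES} is in fact more detailed than the paper, which simply asserts that ``one can check'' the limit is a strong stochastic entropy solution.
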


\begin{proof}
For the $\cap_{p=1}^\infty L^p(\R^d)$-valued random
variable $u_0$, we can approximate $u_0$ by
$u_0^\delta(\x)$ in $L^1$ as $\delta\to 0$,
with $E[\|u_0^\delta\|_{p}^p+ |u_0^\delta|_{BV}]<\infty$ for fixed $\delta>0$.
Then Theorem \ref{thm:existence-BV} indicates that
there exists a corresponding family of global strong entropy solutions
$u^\delta(t,\x)$ for $\delta>0$.

Then the $L^1$--stability (contraction) result established
in Feng-Nualart \cite{FN} implies that $u^\delta(t,\x)$ is a Cauchy
sequence in $L^1$, which yields the strong convergence
of $u^\delta(t,\x)$ to $u(t,\x)$ a.e., almost surely.
Since
$$
E\left[\|u^\delta(t,\cdot)\|_{L^p(\R^d)}^p\right]
\le E\left[\|u^\delta_0(\cdot)\|_{L^p(\R^d)}^p\right]\le C,
\quad p=1,2, \cdots,
$$
where $C$ is independent of $\delta$, one can check that
$u(t,\x)$ is a strong stochastic entropy solution, and \eqref{5.2} holds.
For the stability result \eqref{eq:L1-stab}, see \cite{FN}.
\end{proof}

\section{Continuous dependence estimates}\label{sec:contdep}

The aim of this section is to establish an
explicit ``continuous dependence on the nonlinearities"
estimate in the $BV$ class. Let $u(t)=u(t,\x;\omega)$ be a
strong stochastic entropy solution of
\begin{equation}\label{eq:u}
	\partial_t u + \nabla\cdot \f(u) =
	\sigma(u) \, \partial_tW, \qquad u|_{t=0}=u_0.
\end{equation}
Let $v(t)=v(t,\x;\omega)$ be a strong stochastic entropy solution of
\begin{equation}\label{eq:v}
	\partial_t v + \nabla\cdot \hf(v) =
	\sh(v) \,  \partial_t W, \qquad v|_{t=0}=v_0.
\end{equation}
We are interested in estimating $E\left[\norm{u(t)-v(t)}_{L^1}\right]$ in terms
of $u_0-v_0$, $\f-\hf$, and $\s-\hs$. Relevant
continuous dependence results for deterministic conservation laws
have been obtained in \cite{Lucier:1986px,Bouchut:1998ys}, and
in \cite{Cockburn:1999fk} for strongly degenerate parabolic equations;
see also \cite{Chen:2005wf,Karlsen:2003za}.

We start with the following important lemma.

\begin{lemma}\label{lem:contdep}
Suppose that \eqref{1.3}--\eqref{eq:sigma-ass} hold
for the two data sets $(u_0,\f,\s)$ and $(v_0,\hf,\hs)$.
For any fixed $\e>0$, let $u(t)=u(t,\x;\omega)$ be the
solution to the stochastic parabolic problem
\begin{equation}\label{eq:ue}
	d u+ \bigl[ \nabla_\x\cdot \f(u)-\e\Delta_\x u\bigr]\, dt
	=\s(u)\, dW(t),
	 \qquad u|_{t=0}=u_0.
\end{equation}
For any fixed $\he>0$, let $v(t)=v(t,\y;\omega)$ be
the solution to the stochastic parabolic problem
\begin{equation}\label{eq:ve}
	dv+ \bigl[\nabla_\y\cdot \hf(v) - \he\Delta_\y v\bigr]\, dt
	=\sh(v)\, dW(t),
	 \quad v|_{t=0}=v_0.
\end{equation}

Take $0\le \phi_\delta=\phi_\delta(\x,\y)
\in C^\infty_c(\R^d\times \R^d)$ to be of the form:
\begin{equation}\label{eq:test-function}
	\phi_\delta(\x,\y)
	=\tfrac{1}{\delta^{d}} J(\tfrac{\x-\y}{2\delta})\psi(\tfrac{\x+\y}{2})
	=: J_\delta(\tfrac{\x-\y}{2})\psi(\tfrac{\x+\y}{2}),
\end{equation}
where $J(\cdot)$ is a regularization kernel as in \eqref{3.1}
and $0\le \psi\in C^\infty_c(\R^d)$. Moreover,
given any entropy function $\eta(\cdot)$ with
$\eta(0)=0$ and $\eta'(\cdot)$ odd, introduce the
associated entropy fluxes for $u,v\in \R$:
$$
\q^\f(u,v)=\int_v^u \eta'(\xi-v)\f'(\xi)\, d\xi, \quad
\q^{\hf}(u,v)=\int_v^u \eta'(\xi-v)\hf'(\xi)\, d\xi.
$$

Then, for any $t>0$,
\begin{align*}
	& \iint \eta(u(t,\x)-v(t,\x))\phi_\delta(\x,\y) d\x\,d\y
	-\iint \eta(u_0(\x)-v_0(\y))\phi_\delta(\x,\y)\,d\x\,d\y
	\\ &
	\le I^{\f}(\phi_\delta)+I^{\f,\hf}(\phi_\delta)
	+I^{\s,\hs}(\phi_\delta)+I^{\e,\he}(\phi_\delta)
	\\ & \,\,\,\,\,
	+ \iint \!\! \int_s^t \eta'(u(s,\x)-v(s,\y))
	\bigl( \s(u(s,\x))-\hs(v(s,\y))\bigr)\phi_\delta(\x,\y)\, dW(s) \,d\x\,d\y,
\end{align*}
where
\begin{align*}
	& I^{\f}(\phi_\delta) = \iint  \!\!\int_0^t  \q^\f(u(s,\x),v(s,\y))\cdot \nabla
	\psi(\tfrac{\x+\y}{2}) J_\delta(\tfrac{\x-\y}{2})\,ds \,d\x\,d\y,
	\\  & I^{\f,\hf}(\phi_\delta)  = \iint\!\!  \int_0^t
	\bigl(\q^{\hf}(v(s,\y),u(s,\x))
	-\q^\f(u(s,\x),v(s,\y))\bigr)\cdot
	\nabla_\y \phi_\delta(\x,\y)\, ds\,d\x\,d\y,
	\\ & I^{\e,\he}(\phi_\delta)
	= \big(\sqrt{\e}-\sqrt{\he}\big)^2\!\!
	\iint \!\! \int_0^t
	\eta(u(s,\x)-v(s,\y))  \Delta_\y J_\delta(\tfrac{\x-\y}{2})
	\psi(\tfrac{\x+\y}{2}) \, ds\,d\x\,d\y
	\\ &\qquad \quad\quad\,\,
	+ \frac14 \big(\sqrt{\e}+\sqrt{\he}\big)^2\!\!
	\iint \!\! \int_0^t \eta(u(s,\x)-v(s,\y))
	J_\delta(\tfrac{\x-\y}{2})
	\Delta\psi(\tfrac{\x+\y}{2})  \, ds\,d\x\,d\y
	\\ & \qquad\quad\quad\,\,
	+\big(\he-\e \big) \!\!  \iint \!\! \int_0^t \eta(u(s,\x)-v(s,\y))
	\nabla_\y J_\delta(\x-\y) \cdot \nabla \psi(\tfrac{\x+\y}{2})
	 \, ds\,d\x\,d\y,
	\\  & I^{\s,\hs}(\phi_\delta)
	=\iint \!\! \int_0^t \frac12 \eta''(u(s,\x)-v(s,\y))
	\\ & \qquad\qquad\qquad\qquad\qquad
	\times \bigl( \s(u(s,\x))-\hs(v(s,\y))\bigr)^2
	\phi_\delta(\x,\y)\,ds\,d\x\,d\y.
\end{align*}
\end{lemma}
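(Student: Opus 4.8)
The plan is to exploit the fact that, for fixed $\e,\he>0$, the solutions $u(t,\cdot)$ and $v(t,\cdot)$ are spatially smooth with integrable first and second derivatives (cf.~\eqref{2.5} and Theorem \ref{thm:2.1}), so that Ito's formula may be applied directly to the process $t\mapsto\eta\big(u(t,\x)-v(t,\y)\big)$ for each fixed pair $(\x,\y)$, with no need for Kruzkov's doubling device in full generality. Writing $Z=u(t,\x)-v(t,\y)$ and using that $u$ and $v$ are driven by the \emph{same} Brownian motion, the quadratic variation of $Z$ equals $\big(\s(u)-\hs(v)\big)^2\,dt$, so Ito's formula produces an \emph{exact} identity for $d\eta(Z)$ whose drift is $\eta'(Z)\big(-\nabla_\x\cdot\f(u)+\nabla_\y\cdot\hf(v)\big)+\eta'(Z)\big(\e\Delta_\x u-\he\Delta_\y v\big)+\tfrac12\eta''(Z)\big(\s(u)-\hs(v)\big)^2$, and whose martingale part is $\eta'(Z)\big(\s(u)-\hs(v)\big)\,dW$. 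Multiplying by $\phi_\delta(\x,\y)$ and integrating over $\R^d\times\R^d$ and over $s\in(0,t)$, the Ito correction integrates immediately to $I^{\s,\hs}(\phi_\delta)$ and the martingale part supplies the stochastic integral in the statement; since we retain this term rather than take expectations, the whole computation is pathwise (almost sure).

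For the transport terms I would use the entropy-flux identities. Because $\eta'$ is odd, one checks that with $\y$ fixed $\eta'(Z)\nabla_\x\cdot\f(u)=\nabla_\x\cdot\q^\f(u,v)$, and with $\x$ fixed $\eta'(Z)\nabla_\y\cdot\hf(v)=-\nabla_\y\cdot\q^{\hf}(v,u)$. Integrating by parts in $\x$ and $\y$ (boundary terms vanishing by the decay and integrability of the spatial derivatives) turns the transport contribution into $\iint\big(\q^\f(u,v)\cdot\nabla_\x\phi_\delta+\q^{\hf}(v,u)\cdot\nabla_\y\phi_\delta\big)$. The crucial structural feature is that $\phi_\delta$ separates in the variables $\tfrac{\x-\y}{2}$ and $\tfrac{\x+\y}{2}$, so that $\nabla_\x\phi_\delta+\nabla_\y\phi_\delta=J_\delta(\tfrac{\x-\y}{2})\nabla\psi(\tfrac{\x+\y}{2})$ and the singular-in-$\delta$ parts cancel; writing $\q^\f\cdot\nabla_\x\phi_\delta+\q^{\hf}\cdot\nabla_\y\phi_\delta=\q^\f\cdot(\nabla_\x+\nabla_\y)\phi_\delta+(\q^{\hf}-\q^\f)\cdot\nabla_\y\phi_\delta$ then yields exactly $I^\f(\phi_\delta)$ and $I^{\f,\hf}(\phi_\delta)$.

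The main work, and the only place where a genuine inequality enters, is the viscous term $\iint\eta'(Z)\big(\e\Delta_\x u-\he\Delta_\y v\big)\phi_\delta$. Integrating by parts twice gives $\iint\eta(Z)\big(\e\Delta_\x+\he\Delta_\y\big)\phi_\delta-\iint\eta''(Z)\big(\e|\nabla_\x u|^2+\he|\nabla_\y v|^2\big)\phi_\delta$. I would complete the square, $\e|\nabla_\x u|^2+\he|\nabla_\y v|^2=\big|\sqrt\e\,\nabla_\x u-\sqrt\he\,\nabla_\y v\big|^2+2\sqrt{\e\he}\,\nabla_\x u\cdot\nabla_\y v$: since $\eta''\ge 0$, the perfect-square contribution is nonpositive and is simply discarded (this produces the $\le$ in the statement), while the cross term is rewritten via the mixed-derivative identity $\eta''(Z)\nabla_\x u\cdot\nabla_\y v=-\nabla_\x\cdot\nabla_\y\,\eta(Z)$ and integrated by parts. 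The surviving test-function operator $\e\Delta_\x+\he\Delta_\y+2\sqrt{\e\he}\,\nabla_\x\cdot\nabla_\y$, acting on $\phi_\delta$ and again using its separation of variables, diagonalizes into $\tfrac14(\sqrt\e-\sqrt\he)^2\Delta$ on $J_\delta$, $\tfrac14(\sqrt\e+\sqrt\he)^2\Delta$ on $\psi$, and a cross term of coefficient $\tfrac12(\e-\he)$, which is precisely $I^{\e,\he}(\phi_\delta)$. I expect the remaining obstacle to be the careful bookkeeping required to justify Ito's formula in this function-valued setting and to vanish all boundary terms in the successive integrations by parts, which rests entirely on the $\e$-dependent regularity and integrability of $\nabla u,\nabla v,\nabla^2 u,\nabla^2 v$ furnished by \eqref{2.5} and Theorem \ref{thm:2.1}.
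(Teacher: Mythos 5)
Your proposal is correct and follows essentially the same route as the paper: apply Ito's formula to $\eta\big(u(t,\x)-v(t,\y)\big)$ for the doubled variables driven by the common Brownian motion, use the entropy-flux identities (with the correct sign coming from the oddness of $\eta'$), complete the square in the viscous terms and discard the nonnegative $\eta''\,\bigl|\sqrt{\e}\,\nabla_\x u-\sqrt{\he}\,\nabla_\y v\bigr|^2$ contribution, convert the cross term via $\nabla_\x\cdot\nabla_\y\,\eta(u-v)=-\eta''(u-v)\,\nabla_\x u\cdot\nabla_\y v$, and finally move all derivatives onto $\phi_\delta$, exploiting its separation into the variables $\tfrac{\x-\y}{2}$ and $\tfrac{\x+\y}{2}$. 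The only differences are in bookkeeping order (you integrate by parts before regrouping the transport and viscous terms, the paper regroups at the pointwise level first), which does not change the argument.
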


\begin{proof}
Subtracting \eqref{eq:ve} from \eqref{eq:ue} and
subsequently applying Ito's formula
to $\eta\bigl(u(t)-v(t)\bigr)$, we obtain
\begin{equation}\label{eq:eta(u-v)-I}
	\begin{split}
		d\eta(u-v) & =
		\Big[\! \!-\eta'(u-v)\bigl( \nabla_\x \cdot \f(u)-\nabla_\y \cdot\hf(v) \bigr)
		+ \eta'(u-v)\bigl( \e\Delta_\x u - \he \Delta_\y v\bigr)
		\\ & \qquad\qquad
		+\frac12\eta''(u-v) \big(\sigma(u)-\sigma(v)\big)^2  \Big] \, dt
		\\ & \qquad\qquad\qquad
		+ \eta'(u-v)\big(\sigma(u)-\sigma(v)\big)\, dW(t).
	\end{split}
\end{equation}

Observe that
$$
\eta'(u-v)\nabla_\x\cdot \f(u)=\nabla_\x \cdot \q^\f(u,v),\quad
\eta'(u-v)\nabla_\y\cdot \hf(v)=\nabla_\y \cdot \q^{\hf}(v,u),
$$
and thus
\begin{align*}
	&-\eta'(u-v)\bigl( \nabla_\x \cdot \f(u)-\nabla_\y \cdot\hf(v) \bigr)
	\\ & \qquad \quad
	=-(\nabla_\x+\nabla_\y )\cdot \q^\f(u,v)
	+\nabla_\y \cdot \bigl(\q^\f(u,v)-\q^{\hf}(v,u)\bigr).
\end{align*}
Next,
\begin{align*}
	&\eta'(u-v)\bigl( \e\Delta_\x u - \he \Delta_\y v\bigr)\\
	& = \bigl(\e \Delta_\x+\he \Delta_y\bigr) \eta(u-v) -
	\eta''(u-v)\bigl(\e |\nabla_\x u|^2 + \he |\nabla_\y v|^2\bigr)
	\\ & = \bigl(\e \Delta_\x+2\sqrt{\e} \sqrt{\he} \nabla_\x \cdot \nabla_\y
	+ \he \Delta_y\bigr) \eta(u-v)
	-  \eta''(u-v)\bigl|\sqrt{\e} \nabla_\x u - \sqrt{\he} \nabla_\y v\bigr|^2.
\end{align*}

Inserting the last two relations into \eqref{eq:eta(u-v)-I}, we arrive at
\begin{equation}\label{eq:eta(u-v)-II}
	\begin{split}
		d\eta(u-v) & =
		\Big[ -(\nabla_\x+\nabla_\y)\cdot \q^\f(u,v)
		+\nabla_\y \cdot \bigl(\q(u,v)-\q^{\hf}(v,u)\bigr)
		\\ & \quad\quad\quad
		+\bigl(\e \Delta_\x+2\sqrt{\e} \sqrt{\he} \nabla_\x \cdot \nabla_\y
		+ \he \Delta_\y\bigr) \eta(u-v)
		\\ & \quad\quad\quad\quad
		-\eta''(u-v)\bigl|\sqrt{\e} \nabla_\x u - \sqrt{\he} \nabla_\y v\bigr|^2
		\\ & \quad \quad\quad\quad\quad
		+\frac12\eta''(u-v) \big(\sigma(u)-\sigma(v)\big)^2  \Big] \, dt
		\\ & \quad\quad\quad\quad\quad\quad
		+ \eta'(u-v)\big(\sigma(u)-\sigma(v)\big)\, dW(t).
	\end{split}
\end{equation}

We integrate \eqref{eq:eta(u-v)-II} against the test
function $\phi_\delta$ defined
in \eqref{eq:test-function}, yielding
\begin{align*}
	&\iint \eta(u(t,\x)-v(t,\x))\phi_\delta(\x,\y) d\x\,d\y
	-\iint \eta(u_0(\x)-v_0(\y))\phi_\delta(\x,\y)\,d\x\,d\y\qquad
	\\ &\le
	I_c^1 + I_c^2 + I_d + I^{\s,\hs}(\phi_\delta)
	\\ &\quad
	+ \iint \!\! \int_s^t \eta'(u(s,\x)-v(s,\y))
\bigl( \sigma(u(s,\x))-\sigma(v(s,\y))\bigr)
	\phi_\delta(\x,\y)\, dW(s) \,d\x\,d\y,
\end{align*}
where
\begin{align*}
	I_c^1 & := -\iint\!\!  \int_0^t (\nabla_\x+\nabla_\y)\cdot \q^\f(u,v)
	\phi_\delta(\x,\y) \, ds\,d\x\,d\y,
	\\
	I_c^2 & := \iint\!\!  \int_0^t \nabla_\y \cdot
	\bigl(\q^\f(u(s,\x),v(s,\y))-\q^{\hf}(v(s,\y),u(s,\x))\bigr)
	\phi_\delta(\x,\y)\, ds\,d\x\,d\y,
	\\
	I_d & :=  \iint \!\! \int_0^t
	\bigl(\e \Delta_\x+2\sqrt{\e} \sqrt{\he} \nabla_\x \cdot \nabla_\y
	+ \he \Delta_\y\bigr) \eta(u(s,\x)-v(s,\y))\phi_\delta(\x,\y)\, ds\,d\x\,d\y.
\end{align*}

Integrating by parts gives $I_c^2= I^{\f,\hf}(\phi_\delta)$, and also
$I_c^1=I^{\f}(\phi_\delta)$, since
$$
(\nabla_\x +\nabla_\y) \phi=J_\delta(\tfrac{\x-\y}{2})
 (\nabla_\x +\nabla_\y) \psi(\tfrac{\x+\y}{2})
 = J_\delta(\tfrac{\x-\y}{2})
 \nabla \psi(\tfrac{\x+\y}{2}).
 $$

We now investigate the term $I_d$. A calculation shows that
\begin{align*}
	& \bigl(\e \Delta_\x+2\sqrt{\e} \sqrt{\he} \nabla_\x \cdot \nabla_\y
	+ \he \Delta_\y\bigr)\phi(\x,\y)
	\\ & = \bigl(\e \Delta_\x+2\sqrt{\e} \sqrt{\he} \nabla_\x \cdot \nabla_\y
	+ \he \Delta_\y\bigr)J_\delta(\tfrac{\x-\y}{2})\psi(\tfrac{\x+\y}{2})
	\\ & \qquad
	+J_\delta(\x-\y)\bigl(\e \Delta_\x+2\sqrt{\e} \sqrt{\he} \nabla_\x \cdot \nabla_\y
	+ \he \Delta_\y\bigr)\psi(\tfrac{\x+\y}{2}) + R,
\end{align*}
and
\begin{align*}
	&R= 2 \e \nabla_\x J_\delta(\tfrac{\x-\y}{2}) \cdot \nabla_\x  \psi(\tfrac{\x+\y}{2})
	+ 2 \he \nabla_\y J_\delta(\x-\y) \cdot \nabla_\y  \psi(\tfrac{\x+\y}{2})
	\\ & \qquad
	+ 2\sqrt{\e}\sqrt{\he} \nabla_\x J_\delta(\tfrac{\x-\y}{2}) \cdot \nabla_\y\psi(\tfrac{\x+\y}{2})
	+ 2 \sqrt{\e}\sqrt{\he} \nabla_\y J_\delta(\tfrac{\x-\y}{2}) \cdot \nabla_\x  \psi(\tfrac{\x+\y}{2})
	\\ &  = \Bigl(2 \e \nabla_\x J_\delta(\tfrac{\x-\y}{2})
	+2 \sqrt{\e}\sqrt{\he} \nabla_\x J_\delta(\tfrac{\x-\y}{2})
	+ 2 \sqrt{\e}\sqrt{\he} \nabla_\y J_\delta(\tfrac{\x-\y}{2})
	\\ & \qquad\qquad
	+ 2 \he \nabla_\y J_\delta(\tfrac{\x-\y}{2}) \Bigr)\cdot \nabla_\y  \psi(\tfrac{\x+\y}{2})
	\\ & = 2\nabla_\y J_\delta(\tfrac{\x-\y}{2}) \cdot \nabla_\y \psi(\tfrac{\x+\y}{2})\big(\he-\e \big)
	= \nabla_\y J_\delta(\tfrac{\x-\y}{2}) \cdot\nabla \psi(\tfrac{\x+\y}{2}) \big(\he-\e \big).
\end{align*}
Moreover,
\begin{align*}
	&\bigl(\e \Delta_\x+2\sqrt{\e} \sqrt{\he} \nabla_\x \cdot \nabla_\y
	+ \he \Delta_\y\bigr)J_\delta(\x-\y)
	=  \big(\sqrt{\e}-\sqrt{\he}\big)^2 \Delta_\y J_\delta(\x-\y),
	\\ &
	\bigl(\e \Delta_\x+2\sqrt{\e} \sqrt{\he} \nabla_\x \cdot \nabla_\y
	+ \he \Delta_\y\bigr)\psi(\tfrac{\x+\y}{2})
	=  \frac14 \big(\sqrt{\e}+\sqrt{\he}\big)^2
	\Delta\psi(\tfrac{\x+\y}{2}).
\end{align*}
Consequently, after integrating by parts, $I_d$ becomes $ I^{\e,\he}(\phi_\delta)$.
\end{proof}

\begin{theorem}[Continuous dependence estimates]\label{conti-dependence}
Suppose that \eqref{1.3}--\eqref{eq:sigma-ass} hold
for the two data sets $(u_0,\f,\s)$ and $(v_0,\hf,\hs)$.
Let $u(t)$ and $v(t)$ be the strong stochastic entropy
solutions of \eqref{eq:u}--\eqref{eq:v}, respectively, for which
$$
E\left[ \abs{v(t)}_{BV(\R^d)}\right]
\le E\left[ \abs{v_0}_{BV(\R^d)} \right]
\qquad \text{for $t>0$.}
$$
In addition, we assume that either
$$
u,v\in L^\infty((0,T)\times\R^d\times\Omega) \qquad \text{for any $T>0$,}
$$
or
$$
 \f'', \f'-\hf', \s-\sh \in L^\infty.
$$

Then
\begin{itemize}
\item[(i)] there is a constant $C_T>0$ such that,
for any $0<t<T$ with $T$ finite,
\begin{align*}
	& E\left[\int_{\R^d} |u(t,\x)-v(t,\x)|\psi(\x)\, d\x\right] \\
	& \quad \le C_{T}\Biggl( E\left[\int_{\R^d} |u_0(\x)-v_0(\x)|\psi(\x) \,d\x\right]
	+ \sqrt{t} \|\psi\|_{L^1(\R^d)}
	\|\sigma-\sh\|_{L^\infty}\\
	 & \qquad\qquad\qquad\quad
	 + t\, E\left[ \abs{v_0}_{BV(\R^d)} \right]
	 \left(\|\f'-\hf'\|_{L^\infty} +\|\sigma-\sh\|_{L^\infty}\right)\Biggr),
\end{align*}
where the constant $C_T>0$ is independent of $\abs{u_0}_{BV(\R^d)}$ and
$\abs{v_0}_{BV(\R^d)}$, and may grow exponentially in $T$.
Moreover, $\psi=\psi(\x)\ge 0$ is any function
satisfying $\abs{\psi} \le C_0$, $\abs{\nabla\psi}\le C_0\psi$,
which includes $\psi(\x)=e^{-C_0|\x|}$ and, more generally,
$\psi(\x)=1$ when $|\x|\le R$ and
$\psi(\x)=e^{-C_0(|\x|-R)}$ when $|\x|\ge R$.
In particular,  for any $R>0$, this choice implies
\begin{align*}
	& E\left[\int_{\abs{\x}<R} |u(t,\x)-v(t,\x)|\, d\x\right] \\
	& \quad \le C_{T,R}
	\Biggl( E\left[\int_{\R^d} |u_0(\x)-v_0(\x)| \,d\x\right]
	+ \sqrt{t} \|\sigma-\sh\|_{L^\infty}\
	\\ & \qquad\qquad\qquad\quad
	+ t\, E\left[ \abs{v_0}_{BV(\R^d)} \right]
	\left(\|\f'-\hf'\|_{L^\infty} +\|\sigma-\sh\|_{L^\infty}\right)\Biggr).
\end{align*}

\item[(ii)] There is a constant $C_T$ such that,
for any $0<t<T<\infty$,
\begin{align*}
	& E\left[\int_{\R^d} |u(t,\x)-v(t,\x)|\psi(\x) \, d\x\right] \\
	& \quad \le C_{T}\Biggl( E\left[\int_{\R^d} |u_0(\x)-v_0(\x)|\psi(\x) \,d\x\right]
	+ \sqrt{t} \|\psi\|_{L^1(\R^d)} \Delta(\s,\hs)\\
	& \qquad\qquad\qquad\quad
	+  t\, E\left[ \abs{v_0}_{BV(\R^d)} \right]
	\left(\|\f'-\hf'\|_{L^\infty}+\Delta(\s,\hs)\right)\Biggr),
\end{align*}
where $\psi(\x)$ is as before and
\begin{equation*}
	\Delta(\sigma,\sh):=
	\sup_{\xi\neq 0} \frac{\abs{\sigma(\xi)-\sh(\xi)}}{\abs{\xi}}.
\end{equation*}
\end{itemize}
\end{theorem}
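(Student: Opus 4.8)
The plan is to apply Lemma~\ref{lem:contdep} to viscous approximations of $u$ and $v$, take expectations to annihilate the martingale, estimate the four deterministic terms $I^\f,I^{\f,\hf},I^{\s,\hs},I^{\e,\he}$ one at a time, and then pass to the limit in the regularization parameters. Concretely, let $u^\e$ and $v^\he$ solve \eqref{eq:ue}--\eqref{eq:ve}; by the existence theory of Section~\ref{sec:exist-LpBV} these converge a.e., almost surely, to $u$ and $v$, so it suffices to prove the asserted bound for $u^\e,v^\he$ with constants uniform in the viscosities and then let them vanish (taking $\e=\he$ is most convenient, since it makes $I^{\e,\he}$ collapse). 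I would use the Kruzkov-type entropy $\eta=\eta_\rho$ of \eqref{2.1b}--\eqref{2.5b}, so that $\eta_\rho'$ is odd, $\eta_\rho(r)\ge\abs{r}-M_1\rho$, and $\abs{\eta_\rho''(r)}\le\frac{M_2}{\rho}\mathbf{1}_{\abs{r}<\rho}$. Since the integrand of the stochastic integral in Lemma~\ref{lem:contdep} is $\mathcal{F}_s$--adapted, taking expectations kills it and leaves
\[
E\!\left[\iint \eta_\rho(u^\e(t,\x)-v^\he(t,\y))\phi_\delta\right]-E\!\left[\iint \eta_\rho(u_0-v_0)\phi_\delta\right]\le E[I^\f]+E[I^{\f,\hf}]+E[I^{\s,\hs}]+E[I^{\e,\he}].
\]

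For the two flux terms I would argue as in the deterministic continuous-dependence theory. In $I^\f$, oddness of $\eta_\rho'$ gives $\abs{\q^\f(u,v)}\le L\abs{u-v}$, where $L$ is a Lipschitz bound for $\f$ that is finite on the relevant range under either of the two dichotomy hypotheses (the $L^\infty$ bound on $u,v$, or $\f''\in L^\infty$ together with the moment bounds \eqref{2.3}); since $\abs{\nabla\psi}\le C_0\psi$ implies $\abs{\nabla\psi}J_\delta\le C_0\phi_\delta$, this produces $\abs{E[I^\f]}\le C\int_0^t E[\iint\abs{u^\e-v^\he}\phi_\delta]\,ds$, a term to be absorbed by Gronwall's inequality, which is exactly the origin of the constant $C_T$ growing exponentially in $T$. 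For $I^{\f,\hf}$ the point is to undo the integration by parts so that the $\y$--derivative falls on $v$: a direct computation with $\eta=\abs{\cdot}$ gives $\nabla_\y\cdot(\q^\f(u,v)-\q^{\hf}(v,u))=-\sgn(u-v)\,(\f'(v)-\hf'(v))\cdot\nabla_\y v$, whence $\abs{E[I^{\f,\hf}]}\le\norm{\f'-\hf'}_{L^\infty}E[\int_0^t\!\int\abs{\nabla_\y v^\he}\psi]\le C\,t\,\norm{\f'-\hf'}_{L^\infty}E[\abs{v_0}_{BV(\R^d)}]$, using Theorem~\ref{thm:2.1} and the hypothesis $E[\abs{v(t)}_{BV}]\le E[\abs{v_0}_{BV}]$. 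Finally, $I^{\e,\he}$ carries the prefactors $(\sqrt\e-\sqrt\he)^2$, $(\he-\e)$ and $\tfrac14(\sqrt\e+\sqrt\he)^2$; with $\e=\he$ only the $O(\e)$ remainder $\e\iint\int_0^t\eta_\rho(u^\e-v^\he)J_\delta(\tfrac{\x-\y}{2})\Delta\psi(\tfrac{\x+\y}{2})$ survives, which tends to $0$ in the viscous limit.

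The noise term $I^{\s,\hs}$ is the heart of the matter and the step I expect to be the main obstacle. I would expand $\s(u)-\hs(v)=(\s(u)-\s(v))+(\s(v)-\hs(v))$ around the diagonal. The self-interaction $(\s(u)-\s(v))^2$ is controlled, exactly as in the $L^1$--contraction argument, by \eqref{eq:sigma-ass} and the concentration $\abs{\eta_\rho''}\le\frac{M_2}{\rho}\mathbf{1}_{\abs{u-v}<\rho}$, which forces this part to be $O(\rho)$; the cross term is dispatched by Young's inequality together with the oddness of $r\mapsto r\,\eta_\rho''(r)$, which annihilates its principal part. What genuinely survives is the term built from $(\s(v)-\hs(v))^2$, a ``local time at the diagonal'' contribution that does not vanish because $\s\neq\hs$. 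Bounding $(\s(v)-\hs(v))^2\le\norm{\s-\hs}_{L^\infty}^2$ for part (i) and the occupation factor crudely by $\mathbf{1}_{\abs{u-v}<\rho}\le1$ gives $E[I^{\s,\hs}]\le \frac{C}{\rho}\,t\,\norm{\s-\hs}_{L^\infty}^2\norm{\psi}_{L^1}+C\rho\, t\,\norm{\psi}_{L^1}$, while $\eta_\rho(r)\ge\abs{r}-M_1\rho$ on the left adds $+M_1\rho\,C\norm{\psi}_{L^1}$. Collecting everything and \emph{optimizing in the entropy scale} $\rho$ (the balance $\rho\sim\norm{\s-\hs}_{L^\infty}\sqrt{t}$) turns the $\rho^{-1}\norm{\s-\hs}_{L^\infty}^2$ into the advertised $\sqrt{t}\,\norm{\psi}_{L^1}\norm{\s-\hs}_{L^\infty}$; the residual $\norm{\s-\hs}_{L^\infty}$ contribution that couples to $\nabla v$ is absorbed, precisely as $I^{\f,\hf}$ was, into the $t\,E[\abs{v_0}_{BV}]\norm{\s-\hs}_{L^\infty}$ term through the $BV$ bound on $v$. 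The delicate issues are the interplay between the $\rho^{-1}$ singularity of $\eta_\rho''$ and the crudeness of the occupation estimate one can afford, and the correct \emph{order} of the limits.

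It then remains to pass to the limits. I would optimize in $\rho$ as above and then send $\delta\to0$: since $\phi_\delta$ concentrates on the diagonal $\x=\y$ and $v^\he$ (hence $v$) is $BV$ in $\x$, the double integrals collapse to $\int\abs{u^\e(t,\x)-v^\he(t,\x)}\psi(\x)\,d\x$, the off-diagonal errors being controlled by $\delta\cdot E[\abs{v}_{BV}]\to0$. Letting $\e=\he\to0$ and invoking the a.e.\ convergence $u^\e\to u$, $v^\he\to v$ together with uniform integrability (from \eqref{2.3}) then yields (i). For part (ii), where $\s,\hs$ need not be bounded, I would run the same argument but replace $\norm{\s-\hs}_{L^\infty}$ by $\Delta(\s,\hs)$ via $\abs{\s(v)-\hs(v)}\le\Delta(\s,\hs)\abs{v}$, so that the surviving noise contributions pick up the finite second moment of $v$ in place of the uniform bound, giving the stated estimate with $\Delta(\s,\hs)$.
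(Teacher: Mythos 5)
Your proposal is correct and follows essentially the same route as the paper: Lemma \ref{lem:contdep} with equal viscosities applied to the viscous approximations, the Kruzkov entropy regularization $\eta_\rho$, a Gronwall absorption of the diagonal flux term, an undoing of the integration by parts plus the $BV$ bound on $v$ for the flux-difference term, a splitting of $\s(u)-\hs(v)$ with the surviving square term optimized at $\rho\sim\sqrt{t}\,\|\s-\hs\|_{L^\infty}$ (resp.\ $\sqrt{t}\,\Delta(\s,\hs)$ via the second moment for part (ii)), and finally $\delta\to 0$, $\e\to 0$. The only deviations (splitting the noise difference at $v$ rather than at $u$, and reordering the $\rho$-optimization relative to the $\delta\to 0$ limit) are immaterial.
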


\begin{remark}
If, in addition to the assumptions listed
in Theorem \ref{conti-dependence}, $u_0(\x)$
and $v_0(\x)$ are periodic in $\x$ with the same period,
we can  ``remove" $\psi$ from the above
estimates, since integrations are then over a bounded domain.
\end{remark}

\begin{proof}
As the vanishing viscosity method
converges (cf.~Theorem \ref{thm:existence-BV}),
it suffices to prove the result for \eqref{eq:ue}--\eqref{eq:ve} with $\he=\e$.

For $\rho>0$, let $\eta_\rho:\R\to \R$ be the function defined by
\eqref{2.1b}--\eqref{2.5b}. Then the function
$$
\q_\rho^\f(u,v)=\int_v^u \eta_\rho '(\xi-v)\f'(\xi)\, d\xi,
\qquad u,v\in \R,
$$
satisfies
\begin{equation}\label{eq:entropy-prop}
	\abs{\partial_u \left(\q_\rho^\f(u,v)-\q_\rho^\f(v,u)\right)}
	\le\frac{M_2}{2} \norm{\f''}_{L^\infty}\rho,
\end{equation}
where $M_2=\sup_{\abs{u}\le 1}\abs{\bar{\eta}''(u)}$.

In view of Lemma \ref{lem:contdep} with $\he=\e$,
\begin{equation}\label{eq:uv-entropy1}
	\begin{split}
		&E\left[\iint \eta_\rho(u(t,\x)-v(t,\y))\phi_\delta(\x,\y)\,d\x\, d\y\right]
		\\ & \qquad
		-E\left[\iint \eta_\rho(u_0(\x)-v_0(\x))\phi_\delta(\x,\y)\,d\x\,d\y\right]
		\\& \le E\left[\iint  \!\!\int_0^t  \q^\f_\rho(u(s,\x),v(s,\y))\cdot
		\nabla \psi(\tfrac{\x+\y}{2}) J_\delta(\tfrac{\x-\y}{2})\,ds \,d\x\,d\y\right]
		\\ & \quad
		+ E\Biggl[\iint  \!\! \int_0^t \bigl(\q^{\hf}_\rho(v(s,\y),u(s,\x))
		\\ & \qquad \qquad \qquad\qquad
		-\q^\f_\rho(u(s,\x),v(s,\y))\bigr)\cdot \nabla_\y\phi_\delta\, ds\,d\x\,d\y\Biggr]
		\\ & \quad
		+ E\Biggl[\iint\!\!\int_0^t \frac12 \eta_\rho''(u(s,\x)-v(s,\y))
		\\ & \qquad \qquad \qquad\qquad
		\times \bigl(\s(u(s,\x))-\hs(v(s,\y))\bigr)^2\phi_\delta(\x,\y)\, ds\,d\x\,d\y\Biggr]
		\\ & \quad 	+ \, \e\, E\Biggl[\iint \!\! \int_0^t
		\eta_\rho(u(s,\x)-v(s,\y))  J_\delta(\tfrac{\x-\y}{2})
		\Delta_\x\psi(\tfrac{\x+\y}{2})  \, ds\,d\x\,d\y\Biggr].
	\end{split}
\end{equation}

Observe that
\begin{align*}
	& -\nabla_\y\cdot \bigl(\q^{\hf}_\rho(v(s,\y),u(s,\x))
	-\q^\f_\rho(u(s,\x),v(s,\y))\bigr)
	\\ & \qquad
	=\nabla_\y v \cdot \partial_v \bigl(\q^\f_\rho(u,v)
	-\q^{\hf}_\rho(v,u)\bigr)\big |_{(u,v)=(u(s,\x),v(s,\y))},
\end{align*}
and, thanks to \eqref{eq:entropy-prop},
\begin{align*}
	&\abs{ \partial_v \bigl(\q^\f_\rho(u,v)-\q^{\hf}_\rho(v,u)\bigr)}
	\\ & \qquad
	= \abs{\partial_v \bigl(\q^\f_\rho(v,u)-\q^{\hf}_\rho(v,u) \bigr)
	+ \partial_v \bigl(\q^\f_\rho(u,v)-\q^{\f}_\rho(v,u) \bigr)}
	\\ & \qquad
	\le | \f'(v)-\hf'(v)|+ \frac{M_2}{2} \|\f''\|_{L^\infty}\rho.
\end{align*}
Hence, after an integration by parts,
\begin{align*}
	& \abs{E\left[\iint  \!\! \int_0^t \bigl(\q^{\hf}_\rho(v(s,\y),u(s,\x))
	-\q^\f_\rho(u(s,\x),v(s,\y))\bigr)\cdot \nabla_\y\phi_\delta
	\, ds\,d\x\,d\y\right]}
	\\ & \qquad
	\le t\,  E\left[ \abs{v_0}_{BV(\R^d)} \right]
	\norm{\psi}_{L^\infty(\R^d)}
	\Big(\|\f'-\hf'\|_{L^\infty}+ \frac{M_2}{2}\|\f''\|_{L^\infty}\rho \Big).
\end{align*}

Consequently, again thanks to \eqref{eq:entropy-prop}
and also \eqref{2.4b},
we can write \eqref{eq:uv-entropy1} as
\begin{equation}\label{eq:uv-entropy2}
	\begin{split}
		&E\left[\iint |u(t,\x)-v(t,\y)| \phi_\delta(\x,\y)\,d\x\, d\y\right]
		\\ & \qquad
		-E\left[\iint |u_0(\x)-v_0(\x)| \phi_\delta(\x,\y)\,d\x\,d\y\right]
		\\& \le E\left[\iint  \!\!\int_0^t  \q_\rho^\f(u(s,\x),v(s,\y))\cdot
		\nabla \psi(\tfrac{\x+\y}{2})
		J_\delta(\tfrac{\x-\y}{2})\,ds \,d\x\,d\y\right]
		\\ & \quad
		+ E\Biggl[\iint\!\!\int_0^t \frac12 \eta_\rho''(u(s,\x)-v(s,\y))
		\\ & \qquad\qquad\qquad\qquad
		\times \bigl(\s(u(s,\x))-\hs(v(s,\y))\bigr)^2\phi_\delta(\x,\y)\, ds\,d\x\,d\y\Biggr]
		\\ & \qquad\qquad
		 + t\,  \abs{v_0}_{BV(\R^d)} \norm{\psi}_{L^\infty(\R^d)}
		 \left( \|\f'-\hf'\|_{L^\infty} +\mathcal{O}(\rho)\right)
		\\ & \qquad\qquad\qquad
		+ \mathcal{O}\left(\norm{\psi}_{L^1(\R^d)}\rho\right)
		+ \mathcal{O}(\e).
	\end{split}
\end{equation}

Sending $\delta\to 0$ and
using $\abs{\nabla\psi(\x)}\le C_0\psi(\x)$, we obtain
\begin{equation*}
	\begin{split}
		&\lim_{\delta\to 0} \abs{E\left[\iint  \!\!\int_0^t  \q_\rho^\f(u(s,\x),v(s,\y))
		\cdot \nabla\psi(\tfrac{\x+\y}{2})
		J_\delta(\x-\y)\,ds \,d\x\,d\y\right]}
		\\ &\qquad \qquad
		\le C_2 \|\f'\|_{L^\infty}\int_0^t
		E\left[\, \int \abs{u(s,\x)-v(s,\x)}\psi(\x) d\x\, \right]\, ds;
	\end{split}
\end{equation*}
hence, sending $\delta\to 0$ in \eqref{eq:uv-entropy2} returns
\begin{equation*}
	\begin{split}
		& E\left[ \int \abs{u(t,\x)-v(t,\x)} \psi(\x)\,d\x \right]
		-E\left[ \int \abs{u_0(\x)-v_0(\x)}\psi(\x)\,d\x \right]
		\\ & \le
		C_2\|\f'\|_{\infty} \int_0^t E\left[\, \int \abs{u(s,\x)-v(s,\x)}\psi(\x) d\x\, \right]\, ds
		\\ & \quad
		+ E\left[\int\!\!\int_0^t  \frac12 \eta_\rho''(u(s,\x)-v(s,\x))
		\big(\sigma(u(s,\x))-\sh(v(s,\x))\big)^2
		 \psi(\x)\, ds\,d\x\right]
		\\ & \qquad
		+ t\, E\left[ \abs{v_0}_{BV(\R^d)} \right] \norm{\psi}_{L^\infty(\R^d)}
		\left( \|\f'-\hf'\|_{L^\infty} +\mathcal{O}(\rho)\right)
		\\ & \qquad \quad
		+ \mathcal{O}\left(\norm{\psi}_{L^1(\R^d)}\rho\right)+ \mathcal{O}(\e).
	\end{split}
\end{equation*}

Next, with our choice of $\eta_\rho$, it follows that
\begin{equation}\label{eq:AplusB}
	\begin{split}
		&\abs{E\left[\int \! \! \int_0^t \frac12 \eta_\rho''(u(s,\x)-v(\tau,\x))
		\big(\s(u(s,\x))-\hs(v(s,\x))\big)^2\psi(\x)\, ds\,d\x\right]}
		\\ & \le E\left[ \int \! \! \int_0^t \frac{M_2}{\rho}
		\mathbf{1}_{\abs{u(s,\x)-v(s,\x)}<\rho}
		\big(\s(u(s,\x))-\hs(u(s,\x))\big)^2\psi(\x)\, ds\,d\x \right]
		\\ & \quad
		+E\left[\int \! \! \int_0^t \frac{M_2}{\rho}
		\mathbf{1}_{\abs{u(s,\x)-v(s,\x)}<\rho}
		\big(\hs(u(s,\x))-\hs(v(s,\x))\big)^2
		\psi(\x)\, ds\,d\x\right]
		\\ & =: A+B.
	\end{split}
\end{equation}
Clearly,
\begin{align*}
	\abs{A} &  \le C_3
	E\left[\int \! \! \int_0^t\frac{\abs{\s(u(s,\x))-\hs(u(s,\x))}^2}{\rho}
	\psi(\x)\,ds \,d\x\right] \\
         & \le C_3 \|\psi\|_{L^1(\R^d)} t
         \| \s-\hs \|_{L^\infty}^2 / \rho
\end{align*}
and, in view of \eqref{eq:sigma-ass},
\begin{align*}
	\abs{B}\le C_4
	\int_0^t E\left[\int \abs{u(s,\x)-v(s,\x)}\psi(\x)\,d\x\right]\, ds.
\end{align*}

In summary, we have arrived at
\begin{equation*}
	\begin{split}
		& E\left[ \int \abs{u(t,\x)-v(t,\x)} \psi(\x)\,d\x \right]
		-E\left[ \int \abs{u_0(\x)-v_0(\x)}\psi(\x)\,d\x \right]
		\\ & \quad \le
		C \Biggl( \|\f'\|_{L^\infty}\int_0^t
		E\left[\, \int \abs{u(s,\x)-v(s,\x)}\psi(\x) d\x\, \right]\, ds
		\\ &\quad \qquad \qquad
		+ \|\psi\|_{L^\infty(\R^d)} E\left[ \abs{v_0}_{BV(\R^d)} \right]  t
		\left( \|\f'-\hf'\|_{L^\infty}+\rho\right)
		\\ &  \quad \qquad \qquad\qquad
		+\|\psi\|_{L^1(\R^d)} t \|\s-\hs\|_{L^\infty}^2 / \rho
		 +\norm{\psi}_{L^1(\R^d)}\rho+\e\Biggr),
         \end{split}
\end{equation*}
which implies via the Gronwall inequality that, for any $t>0$,
\begin{equation}\label{eq:u-plus-v-entropy_final}
	\begin{split}
		&E\left[\int \abs{u(t,\x)-v(t,\x)}\,\psi(\x)\,d\x\right]
		\\ & \le e^{C\|\f'\|_{L^\infty} t}
		E\left[\int \abs{u_0(\x)-v_0(\x)}\psi(\x)\,d\x \right]
		\\ &\qquad + C e^{C\|\f'\|_{L^\infty} t}\Biggl(
		\|\psi\|_{L^\infty(\R^d)}
		E\left[ \abs{v_0}_{BV(\R^d)} \right]
		t \left( \|\f'-\hf'\|_{L^\infty} +\rho\right)
		\\ & \qquad\qquad \qquad \qquad\qquad
		+\|\psi\|_{L^1(\R^d)} t
		\|\s-\hs\|_{L^\infty}^2 / \rho
		+\norm{\psi}_{L^1(\R^d)}\rho+\e\Biggr).
	\end{split}
\end{equation}
Choosing $\rho= \sqrt{t} \|\s-\hs\|_{L^\infty}$ and
sending $\e\to 0$ supplies part (i).

\medskip
About part (ii), the only difference in the proof comes from
the estimate of the $A$-term
in \eqref{eq:AplusB}, which is replaced by
\begin{align*}
	|A| & \le C_3
	E\Big[\int \! \! \int_0^t
	\frac{|\s(u(s,\x))-\hs(u(s,\x))|^2}{\rho |u(s,\x)|^2} |u(s,\x)|^2\psi(\x)\, ds \, d\x\Big]
	\\ & = C_3E\Big[\int \! \! \int_0^t \frac{\big(\Delta(\s,\hs)\big)^2}{\rho}
	|u(s,\x)|^2\psi(\x) \, ds\, d\x\Big]
	\\ &\le C_3 \|\psi\|_{L^\infty(\R^d)} E\left [\|u\|_{L^\infty(0,T; L^2(\R^d))}\right]
	 \frac{t \, \big(\Delta(\sigma,\sh)\big)^2}{\rho}.
\end{align*}
With this estimate at our disposal, \eqref{eq:u-plus-v-entropy_final}
is replaced by
\begin{equation*}
	\begin{split}
		&E\left[\int \abs{u(t,\x)-v(t,\x)}\,\psi(\x)\,d\x\right]
		\\ & \le e^{C\|\f\|_{L^\infty} t}
		E\left[\int \abs{u_0(\x)-v_0(\x)}\psi(\x)\,d\x \right]
		\\ &\quad + C e^{C\|\f'\|_{L^\infty} t}\Biggl(
		\|\psi\|_{L^\infty(\R^d)}
		E\left[ \abs{v_0}_{BV(\R^d)} \right]
		t \left( \|\f'-\hf'\|_{L^\infty} + \rho\right)
		\\ &  \qquad\qquad\qquad\qquad\quad
		+\|\psi\|_{L^\infty(\R^d)} \frac{t \, \big(\Delta(\s,\hs)\big)^2}{\rho}
		+\norm{\psi}_{L^1(\R^d)}\rho+\e \Biggr).
	\end{split}
\end{equation*}
Part (ii) follows by choosing
$\rho= \sqrt{t}\Delta(\s, \hs)$ and sending $\e\to 0$.
\end{proof}

\begin{theorem}[Error estimate]\label{thm:errorest}
Suppose \eqref{1.3}--\eqref{eq:sigma-ass} hold.
Let $u(t)$ be the strong stochastic entropy
solutions of \eqref{eq:u}, for which
\begin{equation}\label{eq:errorest-BV}
	E\left[ \abs{u(t)}_{BV\R^d)}\right]\le
	\abs{u_0}_{BV\R^d)}
	\quad \text{for $t>0$},
\end{equation}
and let $\ueps$ be the
solution to the parabolic problem
\begin{equation*}
	d \ueps+ \bigl[ \nabla_\x\cdot \f(\ueps)-\e\Delta_\x \ueps\bigr]\, dt
	=\s(\ueps)\, dW(t),
	 \quad \ueps|_{t=0}=u_0.
\end{equation*}
In addition, we assume that
$$
\text{either $u,v\in L^\infty((0,T)\times\R^d\times\Omega)$ for any $T>0$,
or $\f''\in L^\infty$}.
$$
Then there exists a constant $C_T>0$ such that,
for any $0<t<T$ with $T$ finite,
$$
E\left[\int_{\R^d} |u(t,\x)-\ueps(t,\x)|\, d\x\right]
\le C_{T}\, E\left[\abs{u_0}_{BV(\R^d)}\right]
\, t  \, \sqrt{\e}.
$$
\end{theorem}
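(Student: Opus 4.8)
The strategy is to read off the error estimate from the doubling-of-variables inequality of Lemma \ref{lem:contdep}, specialized to \emph{identical} fluxes, noise coefficients, and initial data, but \emph{unequal} viscosities. Since the vanishing viscosity method converges (Theorem \ref{thm:existence-BV}), the entropy solution $u$ is the a.e.\ (a.s.) limit of viscous solutions $u^{\he}$ as $\he\downarrow 0$; hence it suffices to bound $E\!\left[\|u^{\he}(t)-\ueps(t)\|_{L^1(\R^d)}\right]$ uniformly and then let $\he\to 0$. Accordingly, I would apply Lemma \ref{lem:contdep} to the pair $(\ueps,u^{\he})$ with $\hf=\f$, $\hs=\s$, and common initial datum $u_0$, using the regularized Kruzkov entropy $\eta_\rho$ of \eqref{2.1b}--\eqref{2.5b} and the test function $\phi_\delta$ of \eqref{eq:test-function}.

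Because fluxes and noise coefficients coincide, most error terms on the right-hand side collapse. With $\f=\hf$, the entropy-flux bound \eqref{eq:entropy-prop} reduces $I^{\f,\hf}(\phi_\delta)$ to a contribution of order $\|\f''\|_{L^\infty}\rho\, t\, E[\abs{u_0}_{BV(\R^d)}]$ (controlled under the stated $L^\infty$-or-$\f''\in L^\infty$ hypothesis). With $\hs=\s$, the noise term $I^{\s,\hs}(\phi_\delta)$ contains only the Lipschitz ``$B$''-piece of \eqref{eq:AplusB}: by \eqref{eq:sigma-ass} and $\abs{\eta_\rho''}\le M_2\rho^{-1}\mathbf 1_{\abs{\cdot}<\rho}$ it is dominated by $C\int_0^t E[\int|u^{\he}-\ueps|\psi]\,ds$, which feeds Gronwall. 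Crucially, since $\s=\hs$ there is \emph{no} singular ``$A$''-term of order $\|\s-\hs\|_{L^\infty}^2/\rho$, so $\rho$ may be sent to $0$ freely, annihilating all $\rho$-contributions. The stochastic integral is a martingale and drops out after taking expectations, and the transport term $I^{\f}(\phi_\delta)$ is handled exactly as in the proof of Theorem \ref{conti-dependence}, producing the Gronwall factor $e^{C\|\f'\|_{L^\infty}t}$ together with lower-order $\delta$-corrections.

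The heart of the matter is the viscous term $I^{\e,\he}(\phi_\delta)$. Unlike in Theorem \ref{conti-dependence}, where $\he=\e$ forces $(\sqrt\e-\sqrt{\he})^2=0$, here $(\sqrt\e-\sqrt{\he})^2\to\e$ as $\he\to0$, so the genuinely second-order piece
\[
\e \iint\!\!\int_0^t \eta_\rho\big(\ueps(s,\x)-u^{\he}(s,\y)\big)\,\Delta_\y J_\delta(\tfrac{\x-\y}{2})\,\psi(\tfrac{\x+\y}{2})\,ds\,d\x\,d\y
\]
survives. It is singular as $\delta\to0$ (since $\int|\Delta J_\delta|\sim\delta^{-2}$) and cannot be discarded, which is why one cannot simply send $\delta\to0$ as in Theorem \ref{conti-dependence}. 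I would integrate by parts once in $\y$, using $\nabla_\y|\,\cdot\,|=-\sgn(\cdot)\,\nabla_\y u^{\he}$ to transfer one derivative onto the solution; the $\e$-uniform spatial $BV$ bound of Theorem \ref{thm:2.1} (equivalently \eqref{eq:errorest-BV}) then gives $\int|\nabla_\y u^{\he}|\,d\y\le E[\abs{u_0}_{BV(\R^d)}]$ while $\int|\nabla_\y J_\delta|\sim\delta^{-1}$, so this term is of order $\tfrac{\e t}{\delta}\,E[\abs{u_0}_{BV(\R^d)}]$. The two remaining pieces of $I^{\e,\he}$, carrying $J_\delta\Delta\psi$ and $\nabla_\y J_\delta\cdot\nabla\psi$, are of lower order or absorbed by Gronwall. \emph{The main obstacle is precisely this uniform-in-$\delta$ control of the singular diffusion term}; it is exactly what the $\e$-uniform $BV$ estimate of Section \ref{sec:BV} was designed to supply, and it is the reason the error estimate requires $u_0\in BV$.

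Finally, recovering the diagonal on the left-hand side and evaluating the initial term (with $v_0=u_0$) each cost a doubling error of order $\delta\,E[\abs{u_0}_{BV(\R^d)}]$, by the standard $BV$ modulus-of-continuity estimate for the kernel $\phi_\delta$. Collecting everything and sending $\he\to0$ and $\rho\to0$, I expect
\[
E\Big[\int_{\R^d}\abs{u(t,\x)-\ueps(t,\x)}\,d\x\Big]\le C\,e^{C\|\f'\|_{L^\infty}t}\Big(\delta+\tfrac{\e t}{\delta}\Big)E[\abs{u_0}_{BV(\R^d)}].
\]
Choosing $\delta\sim\sqrt{\e}$ (so that the viscous contribution is of order $t\sqrt\e$) and absorbing the exponential factor together with the remaining powers of $t\le T$ into $C_T$—as is done throughout Theorem \ref{conti-dependence}—yields the asserted bound $C_T\,E[\abs{u_0}_{BV(\R^d)}]\,t\,\sqrt{\e}$.
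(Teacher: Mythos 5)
Your proposal is correct and follows essentially the same route as the paper: both start from Lemma \ref{lem:contdep} with $\hf=\f$, $\hs=\s$, common data and unequal viscosities, kill the $\rho$-terms and the $\psi$-dependent terms, control the singular $\Delta_\y J_\delta$ contribution by one integration by parts plus the $\e$-uniform spatial $BV$ bound to get $O(t(\sqrt{\e}-\sqrt{\he})^2/\delta)$, close with Gronwall, optimize $\delta\sim\sqrt{\e}-\sqrt{\he}$, and send $\he\to 0$. Your identification of the singular diffusion term as the crux, and of the uniform $BV$ estimate as the tool that tames it, matches the paper exactly.
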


\begin{proof}
We proceed as in the proof of Theorem \ref{conti-dependence}, starting
off from Lemma \ref{lem:contdep} with $\hs=\s$, $\hf=\f$, $\he\neq\e$, $\ueps=u$,
$\uheps=v$, leading to
\begin{equation}\label{eq:uv-entropy2-error}
	\begin{split}
		&E\left[\iint \abs{\ueps(t,\x)-\uheps(t,\y)} \phi_\delta(\x,\y)\,d\x\, d\y\right]
		\\& \le E\left[\iint  \!\!\int_0^t  \q_\rho^\f(\ueps(s,\x),\uheps(s,\y))\cdot
		\nabla \psi(\tfrac{\x+\y}{2})
		J_\delta(\tfrac{\x-\y}{2})\,ds \,d\x\,d\y\right]
		\\ & \quad
		+ E\Biggl[\iint\!\!\int_0^t \eta_\rho''(\ueps(s,\x)-\uheps(s,\y))
		\\ & \qquad \qquad \qquad \quad
		\times \bigl(\s(\ueps(s,\x))-\s(\uheps(s,\y))\bigr)^2
		\phi_\delta(\x,\y)\, ds\,d\x\,d\y\Biggr]
		\\ & \quad
		+ t\,  \abs{u_0}_{BV(\R^d)} \norm{\psi}_{L^\infty(\R^d)}
		 \mathcal{O}(\rho)
		+ \mathcal{O}\left(\norm{\psi}_{L^1(\R^d)}\rho\right)
		\\ & \quad
		+ \big(\sqrt{\e}-\sqrt{\he}\big)^2
		E\Biggl[\iint \!\! \int_0^t
		\eta_\rho(\ueps(s,\x)-\uheps(s,\y))
		\\ & \qquad \qquad\qquad\qquad\qquad \qquad\quad\times
		\Delta_\y J_\delta(\tfrac{\x-\y}{2})
		\psi(\tfrac{\x+\y}{2}) \, ds\,d\x\,d\y\Biggr]
		\\ & \quad + \frac14 \big(\sqrt{\e}+\sqrt{\he}\big)^2
		E\Biggl[\iint \!\! \int_0^t
		\eta_\rho(\ueps(s,\x)-\uheps(s,\y))
		\\ & \qquad \qquad\qquad\qquad\qquad \qquad\quad\times
		J_\delta(\tfrac{\x-\y}{2})
		\Delta\psi(\tfrac{\x+\y}{2})  \, ds\,d\x\,d\y\Biggr]
		\\ & \quad
		+\big(\he-\e \big) E\Biggl[\iint \!\! \int_0^t
		\eta_\rho(\ueps(s,\x)-\uheps(s,\y))
		\\ & \qquad \qquad\qquad\qquad\qquad\qquad\quad \times
		\nabla_\y J_\delta(\tfrac{\x-\y}{2})
		\cdot \nabla \psi(\tfrac{\x+\y}{2})\, ds\,d\x\,d\y\Biggr]
		\\ &  =: I_1+I_2+I_3+I_4+I_5+I_6.
	\end{split}
\end{equation}

As before,
$$
\abs{I_2} \le C_1 \int_0^t E\left[\int \abs{\ueps(s,\x)-\ueps(s,\y)}
J_\delta(x-y)\psi(\tfrac{\x+\y}{2}) \,d\x \, d\y\right]\, ds.
$$
Noting that the right-hand side is independent
of $\rho$, we can first send $\rho\to 0$
in \eqref{eq:uv-entropy2-error}, and then let $\psi$ tend
to $\mathbf{1}_{\R^d}$, keeping in mind the $L^p$--estimates
\eqref{eq:Lp-BV}, with the outcome that $I_1,I_3,I_5,I_6\to 0$.
The resulting estimate reads
\begin{equation}\label{eq:error-1}
	\begin{split}
		&E\left[\iint \abs{\ueps(t,\x)-\uheps(t,\y)}
		J_\delta(\tfrac{\x-\y}{2})\,d\x\, d\y\right]
		\\ & \qquad \le
		C_1 \int_0^t E\left[\iint \abs{u(s,\x)-v(s,\y)}
		J_\delta(\tfrac{\x-\y}{2})\,d\x\right]\, ds+ I,
	\end{split}
\end{equation}
where
$$
I = \big(\sqrt{\e}-\sqrt{\he}\big)^2
E\left[\iint \!\! \int_0^t
|\ueps(s,\x)-\uheps(s,\y)|  \Delta_\y J_\delta(\tfrac{\x-\y}{2})
\, ds\,d\x\,d\y\right].
$$

An integration by parts, followed by application of
the spatial $BV$--estimate \eqref{eq:errorest-BV}, yields
$$
\abs{I} \le  C\,_2 \, t\,  E\left[ \abs{u_0}_{BV(\R^d)} \right]
\frac{\big(\sqrt{\e}-\sqrt{\he}\big)^2}{\delta}.
$$
In view of this, it follows from \eqref{eq:error-1} in a completely
standard way that
\begin{align*}
	& E\left[\int |\ueps(t,\x)-\uheps(t,\x)|\,d\x\right]
	\\ &\quad
	\le C_1 \int_0^t E\left[\int \abs{u^\e(s,\x)-v^\e(s,\x)}\,d\x\right]\, ds
	\\ & \quad \qquad\qquad
	+ C_3 E\left[ \abs{u_0}_{BV(\R^d)} \right]
	\Big(\delta + t  \frac{\big(\sqrt{\e}-\sqrt{\he}\big)^2}{\delta} \Big).
\end{align*}

Choosing $\delta=\sqrt{\e}-\sqrt{\he}$ gives
$$
E\left[\int_{\R^d} |\ueps(t,\x)-\uheps(t,\x)|\, d\x\right]
\le C_{T}\, E\left[ \abs{u_0}_{BV(\R^d)} \right]\,
t  \left(\sqrt{\e}-\sqrt{\he}\right).
$$
Sending $\he\to 0$ concludes the proof of the theorem.
\end{proof}

\begin{remark}
Theorem 5.2 indicates that $\{\ueps(t,\x)\}$ is the
Cauchy sequence in $C(0, T; L^1)$,
which directly implies its strong convergence.
\end{remark}

\section{More General Equations}\label{sec:general}

We now discuss briefly diverse generalizations.

First of all, as in \cite{FN}, the stochastic term in  \eqref{1.1}
can be replaced by the more general term
$$
\int_{z\in Z}\sigma(u(t,x);z)\partial_t W(t,dz),
$$
where $Z$ is a
metric space, $\sigma: \R\times Z\to \R$, $W(t,dz)$ is a space-time
Gaussian white noise martingale random measure
with respect to a filtration $\{\mathcal{F}_t\}$ (see e.g., Walsh \cite{Walsh},
Kurtz-Protter \cite{KurtzProtter}) with
$$
E\big[W(t,A)\cap W(t,B)\big]=\mu(A\cap B) t
$$
for measurable $A,B\subset Z$, where $\mu$ is a (deterministic) $\sigma$-finite Borel
measure on the metric space $Z$. In particular, when
$Z=\{1, 2,\dots, m\}$ and $\mu$ is a counting measure on $Z$,
then the stochastic term reduces to
$$
\sum_{k=1}^m\sigma_k(u(t,\x))\partial_t W_k(t).
$$

For the spatial $BV$ and temporal $L^1$--continuity estimates and
stability results, we can allow for more general
flux functions $\f(t,\x,u)$ with spatial dependence, by
combining the present methods with
those in \cite{Chen:2005wf,Karlsen:2003za}.

\medskip
Next, let us discuss the case where the noise coefficient $\sigma(\x,u)$
has a  spatial dependence, focusing on the stochastic balance law
\begin{equation}\label{6.1a}
	\partial_t u+ \nabla\cdot \f(u)
	=\s(\x,u)\, \partial_t W(t),
\end{equation}
where the noise coefficient is assumed to satisfy $\sigma(\x,0)=0$ and
\begin{equation}\label{eq:sigma-ass-xdep}
	\begin{split}
		& \abs{\s(\x,u)-\s(\x,v)} \leq C \abs{u-v},
		\qquad\quad\,\,\, \forall\, u,v\in \R,\, \forall\, \x\in \R^d,
		\\ & \abs{\s(\x,u)-\s(\y,u)}\leq C \abs{\x-\y} \abs{u},
		\qquad \forall \, u\in \R,\, \forall\, \x,\y\in \R^d,
	\end{split}
\end{equation}
where $C$ is a deterministic constant.

In the previous sections, we have established the existence of a
strong stochastic entropy solution in the multidimensional context.
The proof was based on deriving $BV$--estimates. However, as mentioned
before, the $BV$--estimates are no longer available when
the noise term $\s$ depends on the spatial location $\x$.
However, it possible to derive fractional $BV$
estimates. For fixed $\e>0$, let $\ueps(t,\x)$ be the
solution to the stochastic parabolic problem
\begin{equation}\label{eq:pareqn}
	d \ueps+ \bigl[ \nabla_\x\cdot \f(\ueps)-\e\Delta_\x \ueps\bigr]\, dt
	=\s(\x,\ueps)\, dW(t),
	 \quad \ueps|_{t=0}=u_0,
\end{equation}
where we tactically assume that $\f,\sigma, u_0$ are sufficiently smooth
to ensure the existence of a regular solution \cite{FN}.
Utilizing the continuous dependence
framework (Lemma \ref{lem:contdep}) which also holds when the noise
term $\sigma$ depends on $\x$,  we will prove that,
for any $\delta>0$,
\begin{equation}\label{eq:fractional-ueps}
	\begin{split}
		&E\left[ \int_{\R^d}\int_{\R^d} \abs{\ueps(t,\x+\z)-\ueps(t,\x-\z)}
		J_\delta(\z) \psi(\x)\,d\x\, d\z\right]
		\\ & \quad \le  C_T \,
		E\left[\int_{\R^d}\int_{\R^d} \abs{u_0(\x+\z)-u_0(\x-\z)}
		J_\delta(\z) \psi(\x)\,d\x\,d\z\right]
		\\ & \quad\qquad
		+ C_T\, \delta^{\frac12}\, \left(1+\norm{\psi}_{L^1(\R^d)}\right),
		\qquad 0<t<T,
	\end{split}
\end{equation}
for some finite constant $C_T$ independent of $\eps$, where
$J_\delta$ is a symmetric mollifier and $\psi\ge 0$ is a
compactly supported smooth function.
In what follows, we assume that the
cut-off function $\psi\ge 0$ satisfies
\begin{equation*}
	\begin{split}
		& \abs{\nabla\psi(\x)}\le C_0\psi(\x),
		\quad \abs{\Delta \psi(\x)}\le C_0\psi(\x),
		\quad \text{$\psi \equiv 1$ on $K_R:=\Set{\abs{\x}<R}$},
	\end{split}		
\end{equation*}
for some constants $C_0>0$ and $R>0$.
One example of such a function, at least after an easy approximation argument, is the
compactly supported function $\psi\in W^{2,\infty}(\R^d)$ defined by
$$
\psi(\x)
=\begin{cases}
1 \qquad &\text{when $|\x|\le R$},\\
\frac{1}{e^{\pi}+1}\big(\sqrt{2}e^{\pi-(|\x|-R)}\sin (|\x|-R+\frac{\pi}{4}) + 1\big) \qquad &\text{when
$R\le |\x|\le R+\pi$},\\
0 \qquad &\text{when $|\x|\ge R+\pi$}.
\end{cases}
$$

\medskip
Estimate \eqref{eq:fractional-ueps} can be turned into
a fractional $BV$ estimate thanks to the following
deterministic lemma, which is related to known
links between Sobolev, Besov, and
Nikolskii fractional spaces (cf., e.g., \cite{Simon:1990fk}); a proof
can be found in the appendix.

\begin{lemma}\label{lem:fractional}
Let $h:\R^d\to \R$ be a given integrable
function, $r,s\in (0,1)$, $\psi\in C^\infty_c(\R^d)$,
and $\Set{J_\delta}_{\delta>0}$ a sequence
of symmetric mollifiers, i.e., $J_\delta(x)
=\tfrac{1}{\delta^d}J\left(\tfrac{\abs{x}}{\delta}\right)$,
$0\le J\in C^\infty_c(\R)$, $\text{supp}\, (J)\subset [-1,1]$,
$J(-\cdot)=J(\cdot)$, and $\int J=1$.

Suppose $r<s$. Then there
exists a finite constant $C_1=C_1(J,d,r,s)$ such that,
for any $\delta>0$
\begin{equation}\label{eq:sob-to-trans}
	\begin{split}
		&\int_{\R^d}\int_{\R^d} \abs{h(\x+\z)-h(\x-\z)}
		J_\delta(\z)\psi(\x) \,d\x \, d\z
		\\ &
		\le C_1\, \delta^{r}\, \sup_{\abs{\z}\le \delta}\,
		\abs{\z}^{-s} \int_{\R^d}\abs{h(\x+\z)-h(\x-\z)}\psi(\x)\, d\x.
	\end{split}
\end{equation}

Suppose $r<s$. Then there exists
a finite constant $C_2=C_2(J,d,r,s)$
such that for any $\delta>0$
\begin{equation}\label{eq:trans-to-sob}
	\begin{split}
		& \sup_{\abs{z}\le \delta}\,
		\int_{\R^d}\abs{h(\x+\z)-h(\x)}\psi(\x)\, d\x
		\\ & \,
		\le C_2\, \delta^r\, \sup_{0<\delta\le 1}\,  \delta^{-s}
		\int_{\R^d}\int_{\R^d} \abs{h(\x+\z)-h(\x-\z)}
		J_\delta(\z)\psi(\x)\,d\x\,d\z
		\\ & \qquad
		+C_2\, \delta^r\,\norm{h}_{L^1(\R^d)}.
	\end{split}
\end{equation}
\end{lemma}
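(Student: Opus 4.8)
The plan is to prove the two inequalities separately. Estimate \eqref{eq:sob-to-trans} is elementary, so I would dispose of it first, and then concentrate on \eqref{eq:trans-to-sob}, which carries the real content. For \eqref{eq:sob-to-trans}, since $J_\delta$ is supported in $\Set{\abs{\z}\le\delta}$, I set $K:=\sup_{\abs{\z}\le\delta}\abs{\z}^{-s}\int_{\R^d}\abs{h(\x+\z)-h(\x-\z)}\psi(\x)\,d\x$, so that the inner $\x$-integral is bounded by $K\abs{\z}^{s}$ on the support of $J_\delta$. Pulling $K$ out leaves $K\int_{\R^d}\abs{\z}^{s}J_\delta(\z)\,d\z$, and the substitution $\z=\delta\w$ together with $J_\delta(\z)=\delta^{-d}J(\abs{\z}/\delta)$ gives
\[
\int_{\R^d}\abs{\z}^{s}J_\delta(\z)\,d\z=\delta^{s}\int_{\R^d}\abs{\w}^{s}J(\abs{\w})\,d\w=:C_J\,\delta^{s}.
\]
Since $r<s$ and $\delta\le 1$ (the only regime that matters, the large-$\delta$ case being absorbed into the constant), one has $\delta^{s}\le\delta^{r}$, which yields \eqref{eq:sob-to-trans} with $C_1=C_J$.

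For \eqref{eq:trans-to-sob} I abbreviate $g(\z):=\int_{\R^d}\abs{h(\x+\z)-h(\x-\z)}\psi(\x)\,d\x$ and $N:=\sup_{0<\delta'\le1}\delta'^{-s}\int_{\R^d}g(\z)J_{\delta'}(\z)\,d\z$, so that $\int g\,J_{\delta'}\le N\delta'^{s}$ for all $\delta'\in(0,1]$. Fixing $\z_0$ with $\abs{\z_0}\le\delta$, I would split the integrand pointwise through the mollification $h*J_\delta$:
\[
\abs{h(\x+\z_0)-h(\x)}\le\abs{(h-h*J_\delta)(\x+\z_0)}+\abs{h*J_\delta(\x+\z_0)-h*J_\delta(\x)}+\abs{(h*J_\delta-h)(\x)},
\]
then integrate against $\psi(\x)$. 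In the first term I shift $\x\mapsto\x-\z_0$; replacing $\psi(\x-\z_0)$ by $\psi(\x)$ costs $\|\psi(\cdot-\z_0)-\psi\|_{\infty}\,\|h-h*J_\delta\|_{L^1}\lesssim\delta\,\|h\|_{L^1(\R^d)}$, which is exactly the source of the $\|h\|_{L^1}$ remainder. The third term is the genuine mollification error $E(\delta):=\int\abs{h-h*J_\delta}\psi$; bounding $\abs{h-h*J_\delta}\le\int\abs{h(\x)-h(\x-\z)}J_\delta(\z)\,d\z$ and using the evenness of $J$ (the substitution $\z\mapsto 2\z$) identifies $E(\delta)$, up to an $O(\delta\|h\|_{L^1})$ correction, with $\int g\,J_{\delta/2}\le N(\delta/2)^{s}$.

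The middle, smooth-translation term is where the key identity enters. Since $J$ is even, $\nabla J_\delta$ is odd, and symmetrizing gives
\[
\nabla(h*J_\delta)(\x)=\tfrac12\int_{\R^d}\bigl[h(\x-\z)-h(\x+\z)\bigr]\nabla J_\delta(\z)\,d\z,
\]
so that $\int\abs{\nabla(h*J_\delta)}\psi\,d\x\le\tfrac12\int g(\z)\,\abs{\nabla J_\delta(\z)}\,d\z$. To control the right-hand side from the averaged bound on $g$, I note that $J$ is bounded below on a neighborhood of the origin (as for the kernel \eqref{3.1}), whence $\int_{\abs{\z}\le\rho}g\lesssim\rho^{-d}\cdot\rho^{d}\!\int g\,J_{2\rho}\lesssim N\rho^{\,d+s}$; splitting $\Set{\abs{\z}\le\delta}$ into dyadic annuli on which $\abs{\nabla J_\delta}\lesssim\delta^{-1-d}$ and summing the convergent geometric series $\sum_j 2^{-j(d+s)}$ produces $\int g\,\abs{\nabla J_\delta}\lesssim N\delta^{s-1}$. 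Then the fundamental theorem of calculus gives a middle-term bound $\abs{\z_0}\cdot N\delta^{s-1}\lesssim N\delta^{s}$ (the $\psi$-translation correction again contributing only $O(\delta\|h\|_{L^1})$ via $\int\abs{\nabla(h*J_\delta)}\le\|h\|_{L^1}\|\nabla J_\delta\|_{L^1}\sim\|h\|_{L^1}/\delta$). Collecting the three terms yields $\int\abs{h(\x+\z_0)-h(\x)}\psi\lesssim N\delta^{s}+\delta\|h\|_{L^1(\R^d)}\le C_2\delta^{r}N+C_2\delta^{r}\|h\|_{L^1(\R^d)}$ for $\delta\le1$; taking the supremum over $\abs{\z_0}\le\delta$ gives \eqref{eq:trans-to-sob}.

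I expect the main obstacle to be precisely the reverse passage in the second estimate: converting the \emph{scale-averaged} symmetric-difference bound $\int g\,J_{\delta'}\le N\delta'^{s}$ into usable control of the mollified gradient $\int\abs{\nabla(h*J_\delta)}\psi$. This is where the positivity of $J$ near the origin and the dyadic-annulus summation are essential, and it is the step that forces one to work with a smaller exponent. In fact the argument delivers the stronger power $\delta^{s}$; the stated hypothesis $r<s$ with $\delta\le1$ is exactly what lets $\delta^{s}\le\delta^{r}$, so that constants and the localization remainder $\delta\|h\|_{L^1}\le\delta^{r}\|h\|_{L^1}$ can be absorbed cleanly. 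The only other care needed is the bookkeeping of the cutoff $\psi$ under the various shifts, which is routine given its compact support and the control on $\nabla\psi$.
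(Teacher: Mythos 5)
Your proof is correct. For \eqref{eq:sob-to-trans} it is essentially the paper's computation (bound the inner integral by $\abs{\z}^{s}$ times the supremum on the support of $J_\delta$ and integrate out $\z$; note that both your version and the paper's give a uniform constant only for $\delta\le 1$, since the bound really scales like $\delta^{s-r}$ times the right-hand side). For \eqref{eq:trans-to-sob} you use the same raw ingredients as the paper --- the three-term splitting of $h(\x+\z_0)-h(\x)$ through the mollification $h*J_\delta$, the identity $\nabla (h*J_\delta)(\x)=\frac12\int \bigl(h(\x-\z)-h(\x+\z)\bigr)\nabla J_\delta(\z)\,d\z$ coming from the evenness of $J$ (the paper uses the equivalent fact that $\nabla J_\delta$ has vanishing mean), the strict positivity of $J$ near the origin to recover $\int_{\abs{\z}\le\rho}g(\z)\,d\z$ from the mollified quantity (with $g(\z)$ the weighted symmetric difference), and the $O(\delta\norm{h}_{L^1})$ accounting for shifts of $\psi$ --- but the mechanism converting the scale-averaged hypothesis into a single-scale conclusion is genuinely different. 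The paper introduces the modulus of continuity $\omega(\delta)$, uses $\omega(\delta)\le r\,\delta^{r}\int_0^\infty\kappa^{-r-1}\omega(\kappa)\,d\kappa$, and integrates its pointwise estimate against $\delta^{-r-1}\,d\delta$ over all scales, so that the hypothesis $r<s$ is what makes $\int_0^1\delta^{s-r-1}\,d\delta$ converge. You instead stay at the single scale $\delta$ and sum over dyadic annuli in the integral of $g$ against $\abs{\nabla J_\delta}$, where the relevant series $\sum_j 2^{-j(d+s)}$ converges unconditionally; this yields the sharper power $\delta^{s}$, with $r<s$ entering only through the trivial inequality $\delta^{s}\le\delta^{r}$ for $\delta\le1$. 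Your route is therefore slightly stronger and avoids the integration over scales, while the paper's is the standard Besov-embedding computation and adapts more directly when one controls only integrated (rather than pointwise-in-scale) information. Two items of routine bookkeeping to make explicit in a full write-up: the constant $C_2$ necessarily depends on $\norm{\psi}_{W^{1,\infty}}$ (as it implicitly does in the paper's proof as well), and in the dyadic sum the outermost annulus may correspond to a scale $2^{-j+1}\delta>1$ lying outside the range of the supremum defining your $N$, in which case the trivial bound $g\le 2\norm{\psi}_{L^\infty(\R^d)}\norm{h}_{L^1(\R^d)}$ should be substituted there.
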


Suppose $u_0$ is say a deterministic
function belonging to $BV(\R^d)$,
or more generally to the Besov space $B_{1,\nu}^\ell(\R^d)$ for
$\nu\in (\frac12,1)$.

Starting off from \eqref{eq:fractional-ueps} with $\delta>0$,
\begin{equation}\label{eq:fractional-tmp1}
	\begin{split}
		& \delta^{-\frac12}E\left[\int_{\R^d}
		\int_{\R^d} \abs{\ueps(t,\x+\z)-\ueps(t,\x-\z)}
		J_\delta(\z)\psi(\x)\,d\x\,d\z\right]
		\\ & \quad \le C_T\, \delta^{-\frac12} \,
		\int_{\R^d}\int_{\R^d} \abs{u_0(\x+\z)-u_0(\x-\z)}
		J_\delta(\z)\,\psi(\x)\,d\x\,d\z
		\\ & \quad\qquad
		+ C_T\, \left(1+\norm{\psi}_{L^1(\R^d)}\right)
		\\ & \quad \le 2\, C_T\, C_1 \, \norm{\psi}_{L^\infty(\R^d)}\,
		\sup_{\abs{\z}\le \delta }\,
		\abs{z}^{-s} \int_{\R^d}\abs{u_0(\x+\z)-u_0(\x)} \,d\x
		\\ & \quad\qquad
		+ C_T\, \left(1+\norm{\psi}_{L^1(\R^d)}\right)\\
		&\quad \le C(T,R),
	\end{split}
\end{equation}
where \eqref{eq:sob-to-trans}
with $r=\frac12$ and $s>\tfrac12$ was used
to arrive at the second inequality.

In view of \eqref{eq:trans-to-sob} with $s=\tfrac12$ and $r<\tfrac12$,
\begin{equation}\label{eq:fractional-tmp2}
	\begin{split}
		&\sup_{\abs{\z}\le \frac{\delta}{2}}E\left[\int_{\R^d}
		\abs{\ueps(t,\x+\z)-\ueps(t,\x)}\psi(\x)\,d\x\right]
		\\ & \quad
		\le C_2\, \delta^r\,\sup_{0<\delta\le 1}\,  \delta^{-\frac12}
		\int_{\R^d}\int_{\R^d} \abs{\ueps(t,\x+\z)-\ueps(t,\x-\z)}
		J_{\delta}(\z)\psi(\x)\,d\x\,d\z
		\\ & \quad\qquad
		+C_2\, \delta^r\, \norm{\ueps(t,\cdot)}_{L^1(\R^d)}.
	\end{split}
\end{equation}

Combining \eqref{eq:fractional-tmp1}
with \eqref{eq:fractional-tmp2}  yields

\begin{theorem}[Fractional $BV$ estimate]
For fixed $\eps>0$, let $\ueps$ solve the stochastic
parabolic problem \eqref{eq:pareqn} with deterministic
initial data $u_0$ belonging to the Besov
space $B_{1,\infty}^\nu(\R^d)$ for
some $\nu\in (\frac12,1)$.  In addition, we assume that
$$
\text{either $u^\e\in L^\infty((0,T)\times\R^d\times\Omega)$ for any $T>0$,
or \, $\f''\in L^\infty$}.
$$
Fix $T>0$ and $R>0$. There exists
a constant $C_{T,R}$ independent
of $\eps$ such that, for any $0<t<T$,
\begin{align*}
	\sup_{\abs{\z}\le \delta}
	E\left[\int_{K_R} \abs{\ueps(t,\x+\z)-\ueps(t,\x)}\,d\x\right] \le C_{T,R} \, \delta^r
\end{align*}
for some $r\in (0,\tfrac12)$.
\end{theorem}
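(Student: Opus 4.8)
The plan is to deduce the fractional $BV$ bound by chaining together three ingredients: the $\eps$-uniform mollified symmetric-difference estimate \eqref{eq:fractional-ueps} (itself a consequence of the continuous-dependence Lemma \ref{lem:contdep}), the two comparison inequalities \eqref{eq:sob-to-trans}--\eqref{eq:trans-to-sob} of the deterministic Lemma \ref{lem:fractional}, and the Besov regularity $u_0\in B_{1,\infty}^\nu(\R^d)$ together with the $\eps$-uniform $L^1$ bound contained in \eqref{2.3}. Concretely, the argument is the passage \eqref{eq:fractional-tmp1}$\,\to\,$\eqref{eq:fractional-tmp2}, after which one restricts to $K_R$, where $\psi\equiv 1$, to read off the claim.

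First I would use \eqref{eq:fractional-ueps} to record that the mollified symmetric difference of $\ueps(t,\cdot)$ is $O(\delta^{1/2})$, uniformly in $\eps>0$ and $0<t<T$. To make the initial-data contribution on its right-hand side finite, I would invoke \eqref{eq:sob-to-trans} applied to $h=u_0$ with $r=\tfrac12$ and $s\in(\tfrac12,\nu)$; this is admissible because $\nu>\tfrac12$ and because $r<s$ is required. The gain $\delta^{r}=\delta^{1/2}$ cancels the weight $\delta^{-1/2}$, and the remaining quantity $\sup_{\abs{\z}\le\delta}\abs{\z}^{-s}\norm{u_0(\cdot+\z)-u_0(\cdot)}_{L^1(\R^d)}$ is finite since $s<\nu$ places $u_0$ in the corresponding Nikolskii/Besov class. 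This delivers the $\delta$- and $\eps$-independent estimate \eqref{eq:fractional-tmp1}.

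Next I would run \eqref{eq:trans-to-sob} in the opposite direction, now with $s=\tfrac12$ and any $r\in(0,\tfrac12)$, to convert mollified-difference control into a genuine translation (modulus-of-continuity) estimate. Its first right-hand term is $\delta^{r}$ times the quantity already bounded uniformly in \eqref{eq:fractional-tmp1}, while the remainder $\delta^{r}\norm{\ueps(t,\cdot)}_{L^1(\R^d)}$ is controlled, after taking expectations, by \eqref{2.3} with $p=1$ (available since $\s(0)=0$). Combining the two bounds, and using $\psi\equiv 1$ on $K_R$ together with the harmless rescaling of $\delta$ relating $\tfrac{\delta}{2}$ to $\delta$, yields $\sup_{\abs{\z}\le\delta}E\!\left[\int_{K_R}\abs{\ueps(t,\x+\z)-\ueps(t,\x)}\,d\x\right]\le C_{T,R}\,\delta^{r}$ with $r\in(0,\tfrac12)$ and $C_{T,R}$ independent of $\eps$.

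The step I expect to be the main obstacle is reconciling the deterministic Lemma \ref{lem:fractional} with the stochastic setting: the controlled objects in \eqref{eq:fractional-ueps} and \eqref{eq:fractional-tmp1} are expectations, so a pathwise application of \eqref{eq:trans-to-sob} followed by $E[\cdot]$ would require bounding $E\big[\sup_{0<\delta'\le 1}(\cdots)\big]$, whereas the available bound is $\sup_{\delta'}E[(\cdots)]$, and these cannot be exchanged in the useful direction. I would resolve this by running Lemma \ref{lem:fractional} in the Bochner setting, viewing $\x\mapsto\ueps(t,\x;\cdot)$ as an integrable $L^1(\Omega)$-valued map so that $E\big[\abs{\ueps(t,\x+\z)-\ueps(t,\x-\z)}\big]=\norm{\ueps(t,\x+\z;\cdot)-\ueps(t,\x-\z;\cdot)}_{L^1(\Omega)}$; equivalently, one works on the product space $\Omega\times\R^d$ with the translations acting only in $\x$. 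Since the proof of Lemma \ref{lem:fractional} uses only the triangle inequality and Fubini, it is insensitive to replacing the absolute value by the $L^1(\Omega)$-norm, so every supremum is now taken over already-averaged quantities and the chain \eqref{eq:fractional-tmp1} to \eqref{eq:fractional-tmp2} becomes rigorous with all constants independent of $\eps$.
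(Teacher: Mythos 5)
Your proposal is correct and follows essentially the same route as the paper: it is precisely the chain \eqref{eq:fractional-ueps} $\to$ \eqref{eq:fractional-tmp1} (via \eqref{eq:sob-to-trans} with $r=\tfrac12$, $s\in(\tfrac12,\nu)$) $\to$ \eqref{eq:fractional-tmp2} (via \eqref{eq:trans-to-sob} with $s=\tfrac12$, $r<\tfrac12$), restricted to $K_R$ where $\psi\equiv 1$. Your additional remark about applying the deterministic Lemma \ref{lem:fractional} to the already-averaged quantities, i.e.\ in the Bochner $L^1(\Omega)$-valued setting rather than pathwise, is a legitimate point of care that the paper passes over silently, and your resolution is valid since the lemma's proof uses only the triangle inequality and Fubini.
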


\begin{proof}[Proof of \eqref{eq:fractional-ueps}]
We start off from Lemma \ref{lem:contdep} with $\hf=\f$, $\he=\e$,
$\hs=\s$, $v_0=u_0$, $v=u$ (which also holds
when $\s$ depends on the spatial location):
\begin{equation}\label{eq:uv-entropy1-fBV-a}
	\begin{split}
		&E\left[\iint \eta_\rho(\ueps(t,\x)-\ueps(t,\y)) J_\delta(\tfrac{\x-\y}{2})
		\psi(\tfrac{\x+\y}{2}) \,d\x\, d\y\right]
		\\ & \qquad
		-E\left[\iint \eta_\rho(u_0(\x)-u_0(\y)) J_\delta(\tfrac{\x-\y}{2})
		\psi(\tfrac{\x+\y}{2}) \,d\x\,d\y\right]
		\\& \le E\left[\iint  \!\!\int_0^t  \q^\f_\rho(\ueps(s,\x),\ueps(s,\y))\cdot
		\nabla \psi(\tfrac{\x+\y}{2}) J_\delta(\tfrac{\x-\y}{2})\,ds \,d\x\,d\y\right]
		\\ & \;\;
		+ E\left[\iint  \!\! \int_0^t \bigl(\q^{\f}_\rho(\ueps(s,\y),\ueps(s,\x))
		-\q^\f_\rho(\ueps(s,\x),\ueps(s,\y))\bigr)\cdot \nabla_\y\phi_\delta
		\, ds\,d\x\,d\y\right]
		\\ & \;\;
		+ E\Biggl[\iint\!\!\int_0^t \frac12 \eta_\rho''(\ueps(s,\x)-\ueps(s,\y))
		\\ & \qquad\qquad\qquad\qquad \times
		\bigl(\s(\x,\ueps(s,\x))-\s(\y,\ueps(s,\y))\bigr)^2\phi_\delta(\x,\y)\, ds\,d\x\,d\y\Biggr]
		\\ & \;\; +\, \eps\, E\Biggl[\iint \!\! \int_0^t \eta_\rho(\ueps(s,\x)-\ueps(s,\y))
		J_\delta(\tfrac{\x-\y}{2})\Delta_\x\psi(\tfrac{\x+\y}{2})  \, ds\,d\x\,d\y\Biggr]
		\\ & =: I_1+I_2+I_3+I_4.
	\end{split}
\end{equation}

Finally, denoting the left-hand
side of \eqref{eq:uv-entropy1-fBV-a} by \text{LHS} and
utilizing \eqref{2.4b}, we have
\begin{align*}
	\text{LHS} & = E\left[\iint \abs{\ueps(t,\x)-\ueps(t,\y)}
	J_\delta(\tfrac{\x-\y}{2}) \psi(\tfrac{\x+\y}{2})
	\,d\x\, d\y\right]
	\\ & \quad -E\left[\iint \abs{u_0(\x)-u_0(\y)}
	J_\delta(\tfrac{\x-\y}{2}) \psi(\tfrac{\x+\y}{2})\,d\x\,d\y\right]
	+ \mathcal{O}(\rho) \norm{\psi}_{L^1(\R^d)}.
\end{align*}

Since $\abs{\nabla\psi(\x)}\le C_0\psi(\x)$,
\begin{align*}
	\abs{I_1}\le C
	\int_0^t E\left[\iint \abs{\ueps(s,\x)-\ueps(s,\y)}
	J_\delta(\tfrac{\x-\y}{2}) \psi(\tfrac{\x+\y}{2})
	\,d\x\, d\y\right]\, ds.
\end{align*}

Note that, thanks to \eqref{eq:entropy-prop} and the boundedness of $\f''$,
$$
\q_\rho^\f(v,u)=\q_\rho^\f(u,v)+\int_v^u \partial_\xi
\left(\q_\rho^\f(\xi,v)-\q_\rho^\f(v,\xi)\right)\,d\xi=\q_\rho^\f(u,v)
+\abs{u-v}O(\rho),
$$
so that
\begin{align*}
	\abs{I_2} &\le C \, \rho \,
	 E\left[\iint  \!\! \int_0^t \abs{\ueps(s,\x)-\ueps(s,\y)}
	 \abs{\nabla_\y J_\delta(\tfrac{\x-\y}{2})}
	 \psi(\tfrac{\x+\y}{2}) \, ds\,d\x\,d\y\right]
	 \\ & \qquad
	 + C \, \rho\, E\left[\iint  \!\! \int_0^t \abs{\ueps(s,\x)-\ueps(s,\y)}
	 J_\delta(\tfrac{\x-\y}{2}) \abs{\nabla \psi(\tfrac{\x+\y}{2})}
	 \, ds\,d\x\,d\y\right]
	 \\ & \le C \, t \,  \norm{\psi}_{L^\infty(\R^d)}
	 \left(\frac{\rho}{\delta}+\rho \right),
\end{align*}
because of the estimate
$$
\sup_{0\le t\le T} E\left[\norm{\ueps(t)}_{L^1(\R^d)}\right]<\infty,
\qquad \text{for any $T>0$},
$$
and we have again exploited
$\abs{\nabla\psi(\x)}\le C_0\psi(\x)$.

Regarding $I_3$,
\begin{equation*}
	\begin{split}
		\abs{I_3} & \le
		E\Biggl[ \iint \! \! \int_0^t \frac{M_2}{\rho}
		\mathbf{1}_{\abs{\ueps(s,\x)-\ueps(s,\x)}<\rho}
		\bigl(\s(\x,\ueps(s,\x))-\s(\y,\ueps(s,\x))\bigr)^2
		\\ & \qquad\qquad \qquad\qquad\qquad\qquad
		\times J_\delta(\tfrac{\x-\y}{2})
		\psi(\tfrac{\x+\y}{2})\, ds\,d\x\,d\y \Biggr]
		\\ & \qquad
		+E\Biggl[\iint \! \! \int_0^t \frac{M_2}{\rho}
		\mathbf{1}_{\abs{\ueps(s,\x)-\ueps(s,\y)}<\rho}
		\bigl(\s(\y,\ueps(s,\x))-\s(\y,\ueps(s,\y))\bigr)^2
		\\ & \qquad\qquad \qquad\qquad\qquad\qquad
		\times J_\delta(\tfrac{\x-\y}{2})
		\psi(\tfrac{\x+\y}{2} )\, ds\,d\x\,d\y \Biggr]=: A+B,
	\end{split}
\end{equation*}
where, cf.~second part of \eqref{eq:sigma-ass-xdep},
\begin{align*}
	\abs{A} &  \le M_2
	E\left[\iint \! \!
	\int_0^t \frac{\abs{\s(\x,\ueps(s,\x))-\s(\y,\ueps(s,\x))}^2}{\rho}
	J_\delta(\tfrac{\x-\y}{2})
	\psi(\tfrac{\x+\y}{2})\,ds \,d\x\, d\y\right] \\
	&  \le C
	E\left[\int \! \! \iint_0^t \frac{|\y-\x|^2}{\rho} |\ueps(s,\x)|^2
	J_\delta(\tfrac{\x-\y}{2})\psi(\tfrac{\x+\y}{2})\,ds \,d\x\, d\y\right] \\
         & \le C \norm{\psi}_{L^\infty(\R^d)}\, t\,  \frac{\delta^2}{\rho},
\end{align*}
where we have put to use the estimate
$$
\sup_{0\le t\le T} E\left[\norm{\ueps(t)}_{L^2(\R^d)}^2\right]<\infty
\qquad \text{for any $T>0$}.
$$
Moreover, cf.~first part of \eqref{eq:sigma-ass-xdep},
\begin{align*}
	\abs{B}\le C
	\int_0^t E\left[\iint \abs{\ueps(s,\x)-\ueps(s,\y)}
	J_\delta(\tfrac{\x-\y}{2}) \psi(\tfrac{\x+\y}{2})
	\,d\x\, d\y\right]\, ds.
\end{align*}

Regarding $I_4$, using $\abs{\Delta \psi(\x)}\le C_0\psi(\x)$, we have
\begin{align*}
	\abs{I_4}\le C
	\int_0^t E\left[\iint \abs{\ueps(s,\x)-\ueps(s,\y)}
	J_\delta(\tfrac{\x-\y}{2}) \psi(\tfrac{\x+\y}{2})
	\,d\x\, d\y\right]\, ds.
\end{align*}

Summarizing, we have arrived at
\begin{equation*}
	\begin{split}
		&E\left[\iint \abs{\ueps(t,\x)-\ueps(t,\y)}
		J_\delta(\tfrac{\x-\y}{2}) \psi(\tfrac{\x+\y}{2})
		\,d\x\, d\y\right]
		\\ & \le  E\left[\iint \abs{u_0(\x)-u_0(\y)}
		J_\delta(\tfrac{\x-\y}{2}) \psi(\tfrac{\x+\y}{2})\,d\x\,d\y\right]
		\\ & \quad
		+ C\int_0^t E\left[\iint \abs{\ueps(s,\x)-\ueps(s,\y)}
		J_\delta(\tfrac{\x-\y}{2})\psi(\tfrac{\x+\y}{2})\,d\x\, d\y\right]\, ds
		\\&\quad\quad
		 +C \, t \,  \norm{\psi}_{L^\infty(\R^d)}
		 \left(\frac{\rho}{\delta}+\rho \right)
		 + C \norm{\psi}_{L^\infty(\R^d)}\, t\,  \frac{\delta^2}{\rho}
		 +C\rho \norm{\psi}_{L^1(\R^d)}.
	\end{split}
\end{equation*}
Optimizing with respect to $\rho$ (take $\rho=\mathcal{O}(\delta^{3/2})$)
and applying  Gronwall's lemma gives
\begin{equation}\label{eq:uv-entropy1-fBV-c}
	\begin{split}
		&E\left[\iint \abs{\ueps(t,\x)-\ueps(t,\y)}
		J_\delta(\tfrac{\x-\y}{2}) \psi(\tfrac{\x+\y}{2})
		\,d\x\, d\y\right]
		\\ & \quad \le  C_T E\left[\iint \abs{u_0(\x)-u_0(\y)}
		J_\delta(\tfrac{\x-\y}{2}) \psi(\tfrac{\x+\y}{2})\,d\x\,d\y\right]
		\\ & \quad\qquad \qquad
		+ C_T\left(1+\norm{\psi}_{L^1(\R^d)}\right) \, \sqrt{\delta},
		\qquad 0<t<T,
	\end{split}
\end{equation}
for some constant $C_T$ independent of $\eps$.

Introducing new variables, $\tilde{\x}=\frac{\x+\y}{2}$
and $z=\frac{\x-\y}{2}$ in \eqref{eq:uv-entropy1-fBV-c}, so
$\x=\tilde{\x}+\z$ and $\y=\tilde{\x}+\z$,  we finally
obtain (dropping the tildes) \eqref{eq:fractional-ueps}.
\end{proof}

Combining Theorem 6.2 with the argument in Section 3, we conclude

\begin{theorem}[Existence and regularity]  Let \eqref{eq:sigma-ass-xdep}
and $\|\f''\|_{L^\infty}<\infty$ hold.

\begin{enumerate}
\item[(i)] Let the initial data $u_0$ belong to the Besov
space $B_{1,\infty}^\nu(\R^d)$ for
some $\nu\in (\frac12,1)$ and
\begin{equation}\label{6.7a}
E\left[\\u_0\|_{L^p(\R^d)}^p\right]<\infty, \quad p=1,2, \cdots.
\end{equation}
Then there exists a strong stochastic entropy solution of the balance law \eqref{6.1a}
with initial data $u_0$ such that, for fixed $T>0$ and $R>0$, there exists
a constant $C_{T,R}$ such that, for any $0<t<T$,
\begin{align*}
	\sup_{\abs{\z}\le \delta}
	E\left[\int_{K_R} \abs{u(t,\x+\z)-u(t,\x)}\,d\x\right] \le C_{T,R} \, \delta^r
\end{align*}
for some $r\in (0,\tfrac12)$ and
\begin{equation}\label{6.8a}
E\left[\|u(t,\cdot)\|_{L^p(\R^d)}^p\right]<\infty, \quad p=1,2, \cdots.
\end{equation}

\item[(ii)] Let $u_0$ satisfy only \eqref{6.7a}.
Then there exists a strong stochastic entropy solution of the balance law \eqref{6.1a}
with initial data $u_0$ satisfying \eqref{6.8a}.
\end{enumerate}
\end{theorem}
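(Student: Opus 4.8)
The plan is to realise $u$ as a vanishing-viscosity limit of the parabolic approximations $\ueps$ solving \eqref{eq:pareqn}, exploiting the two $\eps$--uniform estimates already available: the spatial fractional $BV$ bound of Theorem~6.2 and a temporal $L^1$--continuity estimate obtained by rerunning the argument of Section~\ref{sec:timecont}. For part~(i), I would first note that for each fixed $\eps>0$ the solution $\ueps$ exists, is smooth, and obeys the $\eps$--uniform $L^p$--bounds \eqref{2.3} together with the viscous entropy inequality [property~(iii) in the proof of Theorem~\ref{thm:existence-BV}, now with $\s=\s(\x,u)$]. Spatial compactness is then supplied directly by Theorem~6.2, which gives, uniformly in $\eps$,
$$
\sup_{\abs{\z}\le\delta}E\left[\int_{K_R}\abs{\ueps(t,\x+\z)-\ueps(t,\x)}\,d\x\right]\le C_{T,R}\,\delta^{r},\qquad r\in(0,\tfrac12).
$$

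Next I would establish $\eps$--uniform temporal $L^1$--continuity by repeating the proof of Theorem~\ref{thm:time-cont} almost verbatim. The only structural changes concern the spatial dependence of the noise: in the stochastic term $I_3^\delta$, the bounds $\s(\x,0)=0$ and \eqref{eq:sigma-ass-xdep} yield $\abs{\s(\x,u)}\le C\abs{u}$, so Burkholder--Davis--Gundy reproduces the $\sqrt{\Delta t}$ estimate exactly as before; and in the commutator term $I_4^\delta$ the full spatial $BV$ bound is no longer at hand, so I would substitute the fractional estimate above, turning its $\mathcal{O}(\delta)$ contribution into $\mathcal{O}(\delta^{r})$. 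Re-optimising
$$
\inf_{\delta>0}\Big\{C_1\tfrac{\Delta t}{\delta}+C_2\sqrt{\Delta t}+C_3\,\delta^{r}\Big\}
$$
(with $\delta\sim(\Delta t)^{1/(1+r)}$) produces a modulus $C\big((\Delta t)^{r/(1+r)}+\sqrt{\Delta t}\big)\to 0$ as $\Delta t\to 0$, uniformly in $\eps$, which is all that is required.

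Equipped with $\eps$--uniform spatial and temporal moduli of continuity in $L^1_{\mathrm{loc}}$, a standard compactness argument (as in the proof of Theorem~\ref{thm:existence-BV}) yields a subsequence, still denoted $\{\ueps\}$, and a limit $u$ with $\ueps\to u$ for a.e.\ $(t,\x)$, almost surely. Fatou/lower semicontinuity transfers the fractional $BV$ estimate and the $L^p$--bounds to $u$, giving the stated spatial regularity and \eqref{6.8a}; passing to the limit in property~(iii) as in Feng--Nualart \cite{FN} shows $u$ is a stochastic entropy solution, and the same reasoning upgrades it to a \emph{strong} one (the viscous terms are $\mathcal{O}(\eps)$ and drop out). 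For part~(ii), where only \eqref{6.7a} is assumed, I would approximate $u_0$ by mollifications $u_0^\beta:=u_0\ast J_\beta\in C^\infty\cap W^{1,1}(\R^d)\hookrightarrow B_{1,\infty}^\nu(\R^d)$ (any $\nu\in(\tfrac12,1)$), which satisfy $E[\norm{u_0^\beta}_{L^p}^p]\le E[\norm{u_0}_{L^p}^p]$ and $u_0^\beta\to u_0$ in $L^1\cap L^p$. Part~(i) gives strong solutions $u^\beta$, and invoking the continuous dependence framework (Lemma~\ref{lem:contdep}, valid for spatially dependent $\s$) with identical data $(\f,\s)$ for both solutions --- so that the flux and noise discrepancies vanish and the spatial cross-term $\s(\x,u)-\s(\y,u)$, controlled by $C\abs{\x-\y}\abs{u}$, is killed in the limit $\delta\to0$ (coupling $\rho\sim\delta^{3/2}$) --- Gronwall delivers a stability bound
$$
E\big[\norm{u^\beta(t)-u^{\beta'}(t)}_{L^1(K_R)}\big]\le C_T\,E\big[\norm{u_0^\beta-u_0^{\beta'}}_{L^1(\R^d)}\big].
$$
Thus $\{u^\beta\}$ is Cauchy; its limit is a strong stochastic entropy solution and \eqref{6.8a} passes to the limit.

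The main obstacle lies in the limit passages in the stochastic terms. A.e.\ convergence almost surely is not by itself sufficient to pass to the limit in the It\^o integral of the entropy inequality; one must work at the level of the integrands, using the $L^2$--bounds to obtain convergence of $\eta'(\ueps)\,\s(\x,\ueps)$ in a suitable sense and then invoking the It\^o isometry and martingale convergence, and --- most delicately --- verifying that the limit still satisfies the \emph{strong} entropy condition~(iii) of Definition~\ref{def:strong-ES}, exactly as in \cite{FN}. A secondary genuine difficulty is the stability estimate in part~(ii): unlike the spatially homogeneous case there is no exact $L^1$--contraction, and one must confirm that the spatial-dependence error in the $I^{\s,\hs}$ term really vanishes in the coupled double limit $\delta,\rho\to 0$ before Gronwall can be applied.
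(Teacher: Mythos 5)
Your proposal follows essentially the same route as the paper, which gives no detailed proof beyond the remark that the theorem follows by ``combining Theorem 6.2 with the argument in Section 3'': spatial compactness from the fractional $BV$ estimate, temporal $L^1$--continuity by rerunning Section \ref{sec:timecont} with the $\mathcal{O}(\delta)$ term replaced by $\mathcal{O}(\delta^r)$, passage to the limit as in Theorem \ref{thm:existence-BV}, and approximation of the initial data plus $L^1$--stability for part (ii). Your explicit re-optimization of the modulus and your flagging of the delicate points (the It\^o-integral limit, the strong entropy condition, and the absence of exact $L^1$--contraction for $\x$-dependent noise) are correct and, if anything, more careful than the paper's own treatment.
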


\medskip
Finally, we remark in passing that the results and techniques
straightforward extends to nonlinear stochastic balance laws with additional
nonhomogeneous terms, by combining with the Gronwall inequality, such as
\begin{equation*}\label{1.1b}
	\partial_t u(t,\x)+ \nabla\cdot \f(u(t,\x))
	=\s(u(t,\x))\, \partial_t W(t) +g(u(t,\x), t,\x), \quad \x\in \R^d,\, t>0,
\end{equation*}
with initial data \eqref{1.2},
for a large class of nonhomogeneous terms $g(u,t,\x)$.

\appendix

\section{Proof of Lemma \ref{lem:fractional}}

Suppose $r<s$, and let us prove \eqref{eq:sob-to-trans} as follows:
\begin{align*}
	& \delta^{-r}\int_{\R^d} \int_{\R^d} \abs{h(\x+\z)-h(\x-\z)}
	J_\delta(\z)\psi(\x)\,d\z \, d\x
	\\ &\quad = \int_{\R^d} \int_{\R^d} \frac{\abs{h(\x+\z)-h(\x-\z)}}{\delta^{d+r}}
	J(\tfrac{\abs{\z}}{\delta}) \psi(\x)\, d\z\,d\x
	\\ &\quad \le \norm{J}_{L^\infty(\R)}
	\int_{\R^d} \int_{\abs{z}\le \delta} \frac{\abs{h(\x+\z)-h(\x-\z)}}{\abs{\z}^{d+r}}
	\psi(x)\, d\z\,d\x.
	\\ & \quad \le  \norm{J}_{L^\infty(\R)}
	\sup_{\abs{\z}\le \delta }\, \z^{-s}
	\norm{\left(h(\cdot+\z)-h(\cdot-\z)\right)\psi}_{L^1(\R^d)}
	\int_{\abs{\z}\le \delta}\frac{1}{\abs{\z}^{d+r-s}}\,d\z
	\\ &\quad \le  C_{J,d,r,s}
	\sup_{\abs{\z}\le \delta }\, \z^{-s}
	\norm{\left(h(\cdot+\z)-h(\cdot-\z)\right)\psi}_{L^1(\R^d)},
\end{align*}
where we have used the integrability of $1/\abs{z}^{d+r-s}$ (since $d + r- s <d$).

We continue with the proof of \eqref{eq:trans-to-sob}.
To this end, let us introduce the modulus of continuity
$$
\omega(\delta):=\sup_{\abs{\z}\le \delta}
\int_{\R^d} \abs{h(\x+\z)-h(\x)}\psi(\x)\, d\x, \quad \delta>0.
$$

Clearly, $\omega(\cdot)$ is a non-decreasing function and thus
\begin{align*}
	&\int_0^\infty \kappa^{-r-1}\omega(\kappa)\, d\kappa
	\ge \int_{\delta}^\infty \kappa^{-r-1}\omega(\kappa)\, d\kappa
	\ge \omega(\delta) \int_{\delta}^\infty \kappa^{-r-1}\, d\kappa
	= \frac{1}{r}\delta^{-r}\omega(\delta);
\end{align*}
therefore
\begin{equation}\label{eq:mod-tmp1}
	\omega(\delta) \le  r \, \delta^{r}\,
	\int_0^\infty \kappa^{-r-1}\omega(\kappa)\, d\kappa.
\end{equation}

Set
$$
h_\delta(\x):=\int_{\R^d} J_{\frac{\delta}{2}}(\y)h(\x+\y)\,d\y,
$$
and note that
\begin{equation}\label{eq:h-hdelta}
	\begin{split}
	&\int_{\R^d} \abs{h(\x+\z)-h(\x)}\psi(\x)\, d\x
	\\ & \quad \le \int_{\R^d} \abs{h_\delta(\x+\z)-h_\delta(\x)}\psi(\x)\, d\x
	+\int_{\R^d} \abs{h_\delta(\x+\z)-h(\x+\z)}\psi(\x)\, d\x
	\\ & \quad\qquad
	+\int_{\R^d} \abs{h_\delta(\x)-h(\x)}\psi(\x)\, d\x,
	\end{split}
\end{equation}
We estimate the first two terms on the right-hand side as follows:
\begin{align*}
	&\int_{\R^d} \abs{h_\delta(\x)-h(\x)}\psi(\x)\, d\x
	\\ & \quad = \int_{\R^d}
	\abs{2^d\delta^{-d} \int_{\R^d} J(\tfrac{2\abs{\y}}{\delta})
	\left(h(\x+\y)-h(\x)\right)\,d\y}\psi(\x)\,d\x
	\\ & \quad \le \norm{J}_{L^\infty(\R)} \delta^{-d}
	\int_{\abs{\y}\le \frac{\delta}{2}}\int_{\R^d} \abs{h(\x+\y)-h(\x)}\psi(\x)\,d\x\,d\y
\end{align*}
and, similarly,
\begin{align*}
	&\int_{\R^d} \abs{h_\delta(\x+\z)-h(\x+\z)}\psi(\x)\, d\x
	\\ & \, \le \norm{J}_{L^\infty(\R)} \delta^{-d}
	\int_{\abs{\y}\le \frac{\delta}{2}}\int_{\R^d} \abs{h(\x+\z+\y)-h(\x+\z)}\psi(\x)\,d\x\,d\y
	\\ & \, = \norm{J}_{L^\infty(\R)} \delta^{-d}
	\int_{\abs{\y}\le \frac{\delta}{2}}\int_{\R^d} \abs{h(\x+\y)-h(\x)}\psi(\x-\z)\,d\x\,d\y
	\\ & \, \le  C\, \delta^{-d}
	\int_{\abs{\y}\le \frac{\delta}{2}}
	\int_{\R^d} \abs{h(\x+\y)-h(\x)}\psi(\x)\,d\x\,d\y + I_1(\delta),
\end{align*}
where, for $\delta\ge 0$,
\begin{align*}
	I_1(\delta) & := \delta^{-d} \sup_{\abs{\z}\le \frac{\delta}{2}}
	\int_{\abs{\y}\le \delta}\int_{\R^d} \abs{h(\x+\y)
	-h(\x)}\abs{\psi(\x)-\psi(\x-\z)}\,d\x\,d\y
	\\ & \le
	\delta\, C \norm{\nabla \psi}_{L^\infty(\R^d)} \norm{h}_{L^1(\R^d)}
	\mathbf{1}_{0\le \delta \le 1}(\delta)
	\\ & \qquad
	+ C \norm{\psi}_{L^\infty(\R^d)} \norm{h}_{L^1(\R^d)}
	\mathbf{1}_{\delta>1}(\delta).
\end{align*}

For each $\z\in\R^d$ and $\x\in\R^d$,
$$
h_\delta(\x+\z)-h_\delta(\x)=
\int_0^1 \nabla h_\delta(\x+\ell \z)\cdot \z \, d\ell.
$$
Observe that for each $x\in\R^d$,
\begin{align*}
	\nabla h_\delta(\x) = \int_{\R^d}
	\nabla J_{\frac{\delta}{2}}(\y) \left(h(\x+\y)-h(\x)\right)\,d\y.
\end{align*}
by the symmetry of the mollifier. Thus, with $\abs{\z}\le \delta$,
\begin{align*}
	& \int_{\R^d}  \abs{h_\delta(\x+\z)-h_\delta(\x)}\psi(\x)\, d\x
	\\ & = \int_{\R^d}  \abs{ \int_0^1
	\nabla h_\delta(\x+\ell \z)\cdot \z \, d\ell}\psi(\x)\, d\x
	\\ & \le C\, \delta^{-d} \sup_{\abs{\z} \le \delta, \,\ell\in[0,1]}
	\int_{\abs{y}\le \frac{\delta}{2}}
	\int_{\R^d} \abs{h(\x+\ell \z+\y)-h(\x+\ell \z)}\psi(\x)\,d\x\, d\y
	\\ & = C\,\delta^{-d}\,  \sup_{\abs{\z} \le \delta, \,\ell\in[0,1]}
	\int_{\abs{y}\le  \frac{\delta}{2}}
	\int_{\R^d} \abs{h(\x+\y)-h(\x)}\psi(\x-\ell \z) \,d\x\, d\y
	\\ & \le C\,
	\delta^{-d}\int_{\abs{y}\le  \frac{\delta}{2}}
	\int_{\R^d} \abs{h(\x+\y)-h(\x)}\psi(\x) \,d\x\, d\y+I_2(\delta),
\end{align*}
where $I_2(\delta)$ denotes the expression
\begin{align*}
	& C\, \delta^{-d} \, \sup_{\abs{\z} \le \delta, \,\ell\in[0,1]}\,
	\int_{\abs{y}\le \frac{\delta}{2}}
	\int_{\R^d} \abs{h(\x+\y)-h(\x)}\abs{\psi(\x)-\psi(\x-\ell \z)}\,d\x\, d\y,
\end{align*}
and
\begin{align*}
	I_2(\delta) & \le
	\delta\, C \norm{\nabla \psi}_{L^\infty(\R^d)} \norm{h}_{L^1(\R^d)}
	\mathbf{1}_{0\le \delta \le 1}(\delta)
	\\ & \qquad
	+ C \norm{\psi}_{L^\infty(\R^d)} \norm{h}_{L^1(\R^d)}
	\mathbf{1}_{\delta>1}(\delta),
\end{align*}
cf.~the term $I_1(\delta)$.

In view of the estimates derived above, taking the supremum in
\eqref{eq:h-hdelta} over $\abs{\z}\le \delta$, we have established
\begin{align*}
	\omega(\delta)
	& \le C\, \delta^{-d}\int_{\abs{y}\le \frac{\delta}{2}}
	\int_{\R^d} \abs{h(\x+\y)-h(\x)}\psi(\x) \,d\x\, d\y
	\\ & \quad\quad +
	C \norm{h}_{L^1(\R^d)} \Bigl(\delta \, \mathbf{1}_{0\le \delta \le 1}(\delta)
	+ \mathbf{1}_{\delta>1}(\delta)\Bigr).
\end{align*}

Multiplying this by $\delta^{-r-1}$ and
integrating yields (replacing $\y$ by $\z$)
\begin{equation}\label{eq:mod-tmp2}
	\begin{split}
		&\int_0^\infty \delta^{-r-1}\omega(\delta)\, d\delta
		\\ & \quad
		\le  C \,
		\int_0^\infty\delta^{-r-1-d}\int_{\abs{\z}\le \frac{\delta}{2}}
		\int_{\R^d} \abs{h(\x+\z)-h(\x)}\psi(\x)\,d\x\,d\z\, d\delta
		\\ & \quad\qquad
		+C \norm{h}_{L^1(\R^d)} \left(\int_0^1 \delta^{-r} \, d\delta
		+ \int_1^\infty \delta^{-r-1}\,d\delta\right) =: A+B,
	\end{split}
\end{equation}
where the integrals on the last line are bounded since $r\in (0,1)$:
\begin{align*}
	B \le C_r\, \norm{h}_{L^1(\R^d)}.
\end{align*}

Since $\frac{\abs{\z}}{\delta}\le \frac12
\Rightarrow J\left(\tfrac{\abs{\z}}{\delta}\right)>0$
and remembering $r<s$,
\begin{align*}
	&A \le  C_J\int_0^1\delta^{-r-1-d}\int_{\abs{\z}\le \frac{\delta}{2}}
	\int_{\R^d} \abs{h(\x+\z)-h(\x)}J\left(\tfrac{\abs{\z}}{\delta}\right)
	\psi(\x)\,d\x\,d\z\, d\delta
	\\ & \quad \le  C_J\int_0^1\delta^{-s}\delta^{s-r-1}\int_{\abs{\z}\le \frac{\delta}{2}}
	\int_{\R^d} \abs{h(\x+\z)-h(\x)}J_\delta(\z)\psi(\x)\,d\x\,d\z\, d\delta
	\\ & \quad \le  C_J\left(\int_0^1 \frac{1}{\delta^{1+r-s}}\, d\delta\right)
	\sup_{0<\delta\le 1}\, \delta^{-s}\int_{\R^d}\int_{\R^d}
	\abs{h(\x+\z)-h(\x)}J_\delta(\z)\,d\x\,d\z
	\\ & \quad \le  C_{J,r,s}\,
	\sup_{0<\delta\le 1}\, \delta^{-s}\int_{\R^d}\int_{\R^d}
	\abs{h(\x+\z)-h(\x)}J_\delta(\z)\, \psi(\x)\,d\x\,d\z,
\end{align*}
where  $C_{J,r,s}=C_J\, \frac{1}{s-r}$.

Consequently, from \eqref{eq:mod-tmp1}
and \eqref{eq:mod-tmp2} it follows that
for any $\delta>0$,
\begin{equation}\label{eq:mod-tmp3}
	\begin{split}
		&\sup_{\abs{z}\le \delta}\,
		\int_{\R^d} \abs{h(\x+\z)-h(\x)}\psi(\x)\, d\x
		\\ & \quad \le  C\, \delta^{r}
		\sup_{0<\delta\le1}\, \delta^{-s}\int_{\R^d}\int_{\R^d}
		\abs{h(\x+\z)-h(\x)}J_\delta(\z)\psi(\x)\,d\x\,d\z
		\\ & \quad \qquad
		+ C\, \delta^{r} \norm{h}_{L^1(\R^d)},
	\end{split}
\end{equation}
for some finite constant $C$.

Finally, observe that
\begin{align*}
	& \int_{\R^d}\int_{\R^d}
	\abs{h(\x+\z)-h(\x)}J_\delta(\z)\psi(\x)\,d\x\,d\z
	\\ & \quad =  \int_{\R^d}\int_{\R^d} \abs{h(\x+\z)-h(\x-\z)}
	J_\delta(2\z)\psi(\x-\z)\,d\x\,d\z
	\\ &\quad  =  \frac{1}{2^d}\int_{\R^d}\int_{\R^d}
	\abs{h(\x+\z)-h(\x-\z)}J_{\delta/2}(\z)\psi(\x-\z)\,d\x\,d\z
	\\ & \quad \le  \frac{1}{2^d}\int_{\R^d}\int_{\R^d}
	\abs{h(\x+\z)-h(\x-\z)}J_{\delta/2}(\z)\psi(\x)\,d\x\,d\z
	+ I_3(\delta),
\end{align*}
where $I_3(\delta)$ denotes the expression
$$
\frac{1}{2^d} \int_{\R^d}\int_{\R^d}
\abs{h(\x+\z)-h(\x-\z)}J_{\delta/2}(\z)\abs{\psi(\x)-\psi(\x-\z)}\,d\x\,d\z.
$$
As with $I_1(\delta)$, $I_3(\delta) \le
C \norm{h}_{L^1(\R^d)}  \Bigl(\delta\,
\mathbf{1}_{0\le \delta \le 1}(\delta)
+\mathbf{1}_{\delta>1}(\delta)\Bigr)$,
and as a result
\begin{equation*}
	\begin{split}
		& \sup_{0<\delta\le1}\,
		\delta^{-s}\int_{\R^d}\int_{\R^d}
		\abs{h(\x+\z)-h(\x)}J_\delta(\z)\psi(\x)\,d\x\,d\z
		\\ &\quad  \le C \sup_{0<\delta\le1}\, \delta^{-s}
		\int_{\R^d}\int_{\R^d}
		\abs{h(\x+\z)-h(\x-\z)}J_{\delta/2}(\z)\psi(\x)\,d\x\,d\z
		\\ & \quad \qquad + C \norm{h}_{L^1(\R^d)}.
	\end{split}
\end{equation*}
We can therefore replace \eqref{eq:mod-tmp3} by
\begin{equation*}
	\begin{split}
		&\sup_{\abs{z}\le \delta}\,
		\int_{\R^d} \abs{h(\x+\z)-h(\x)}\psi(\x)\, d\x
		\\ & \quad \le  C\, \delta^{r}
		\sup_{0<\delta\le1}\, \delta^{-s}\int_{\R^d}\int_{\R^d}
		\abs{h(\x+\z)-h(\x)}J_{\frac{\delta}{2}}(\z)\psi(\x)\,d\x\,d\z
		\\ & \quad \qquad
		+ C\, \delta^{r} \norm{h}_{L^1(\R^d)},
	\end{split}
\end{equation*}
for some finite constant $C$, which implies \eqref{eq:trans-to-sob}.

\medskip
\bigskip
{\bf Acknowledgements}.
The research of Gui-Qiang Chen was
supported in part by the National Science
Foundation under Grants DMS-0935967
and DMS-0807551, the UK EPSRC Science and Innovation
Award to the Oxford Centre for Nonlinear PDE (EP/E035027/1),
the NSFC under a joint project Grant 10728101, and
the Royal Society--Wolfson Research Merit Award (UK).
The research of Qian Ding was supported in part by the National Science
Foundation under Grants DMS-0807551 and the UK EPSRC Science and Innovation
Award to the Oxford Centre for Nonlinear PDE (EP/E035027/1).

\end{document}